 \documentclass[twoside,12pt,reqno]{amsart}
\usepackage{soul,cite,lineno,cancel}
\usepackage{booktabs,multirow,slashbox,xcolor,verbatim,relsize,tikz,tikz-cd,rotating,lscape,soul}
\usepackage{hyperref,enumitem}
\usepackage{latexsym,amsmath,amsthm,amsfonts,amssymb,mathtools}

\hyphenation{pro-per-ties} \hyphenation{cha-rac-te-ri-za-tions}
\hyphenation{stron-gly} \hyphenation{pro-per-ties}
\hyphenation{Pro-per-ty} \hyphenation{si-mi-lar}
\hyphenation{re-gu-la-ri-ty} \hyphenation{ine-qua-li-ty}
\hyphenation{non-over-lapping} \hyphenation{glo-bal-ly}
\hyphenation{sub-in-ter-vals} \hyphenation{se-quen-ce}
\hyphenation{u-ni-for-mly} \hyphenation{ge-ne-ra-li-zed}
\hyphenation{con-ti-nu-ous} \hyphenation{glo-bal-ly}
\hyphenation{sub-in-ter-vals} \hyphenation{se-quen-ce}
\hyphenation{u-ni-for-mly} \hyphenation{ge-ne-ra-li-zed}
\hyphenation{ter-mi-no-lo-gy} \hyphenation{re-fe-ren-ce}
\hyphenation{Theo-rem} \hyphenation{sub-in-ter-vals}
\hyphenation{dif-fe-ren-tia-ble} \hyphenation{u-ni-for-mly}
\hyphenation{e-qui-va-lent} \hyphenation{no-ti-cing}
\hyphenation{ge-ne-ra-ted}
\hyphenation{mul-ti-mea-su-res}
\linespread{1.1}

\newtheorem{theorem}{Theorem}[section]
\newtheorem{definition}[theorem]{Definition}

\newtheorem{remark}[theorem]{Remark}
\newtheorem{proposition}[theorem]{Proposition}
\newtheorem{corollary}[theorem]{Corollary}


\newcommand{\ve}{\varepsilon}

\newcommand{\vO}{\varOmega}

\def\N{{\mathbb N}}

\def\be{\begin{equation}}
 \newcommand{\dint}{\displaystyle{\int}}
\def\ee{\end{equation}}
\def\ba*{\begin{eqnarray*}}
	\def\ea*{\end{eqnarray*}}
\def\ba{\begin{eqnarray}}
\def\ea{\end{eqnarray}}
\def\smi{\setminus}

\begin{document}

\title[]{Convergence for varying measures in the  topological case}
 \subjclass[2020]{ 28B20, 26E25, 26A39, 28B05, 46G10, 54C60, 54C65}
 \keywords{setwise convergence, convergence in total variation, functional analysis, uniform integrability, absolute integrability,  Pettis integral, multifunction.}
\author{L. Di Piazza, V. Marraffa,
  K. Musia{\l}, A. R. Sambucini}
\begin{abstract}
In this paper convergence  theorems  for  sequences of scalar, vector and multivalued Pettis integrable
functions on a topological measure space
are proved for varying measures vaguely convergent.
\end{abstract}
\newcommand{\Addresses}{{
  \bigskip
  \footnotesize
\textit{Luisa Di Piazza and Valeria Marraffa}:
 Department of Mathematics, University of Palermo, Via Archirafi 34, 90123 Palermo, (Italy).
 Emails: luisa.dipiazza@unipa.it, valeria.marraffa@\-unipa.it, Orcid ID: 0000-0002-9283-5157,
0000-0003-1439-5501
\\
\textit{Kazimierz Musia{\l}}:
  Institut of Mathematics, Wroc{\l}aw University, Pl. Grunwaldzki  2/4, 50-384 Wroc{\l}aw, (Poland).
  Email: kazimierz.musial@math.uni.wroc.pl, Orcid ID: 0000-0002-6443-2043 \\
\textit{Anna Rita Sambucini
}:
 Department of Mathematics and Computer Sciences, 06123 Perugia, (Italy).
 Email: anna.sambucini@unipg.it, Orcid ID: 0000-0003-0161-8729; ResearcherID: B-6116-2015.
}}
\date{\today}

\maketitle

\section{Introduction}\label{s-intro}
Conditions for the  convergence of sequences of measures $(m_n)_n$ and of 
 their integrals $(\int f_n dm_n)_n$  in
a measurable space $\Omega $ are of  interest in many areas  of pure and applied mathematics
such as  statistics,    transportation problems, interactive partial systems,
neural networks and  signal  processing
(see, for instance, \cite{MIMMO,carlo,avendano,danilo,xdanilo,mdpi,gal,Cox2}).
In particular, for the image reconstruction, which is a branch of signal theory,  in the last years,
interval-valued functions have been considered since the process of discretization of an image  is affected by
 quantization errors (\cite{mesiar}) and  its  numerical  approximation
can be interpreted as  a suitable sequence of interval-valued functions
 (see for instance  \cite{latorre,Pap-}).

Obviously,   suitable convergence notions  are needed for the varying measures, see for example
\cite{Fein-arX1807,Fein-arX1902,lerma,lasserre,serfozo,Ma,M-S}  and the references therein.
 In a previous paper \cite{dmms} we have  examined the problem
 when the varying measures converge setwisely   in an arbitrary measurable space. This type of convergence is a powerful tool
 since it permits to obtain strong results,  for example the Vitali-Hahn-Saks Theorem or a Dominated Convergence Theorem  \cite{lerma}.\\
 But sometime in the  applications it is difficult, at least technically,
  to prove that the sequence  $(m_n(A))_n$  converges to $m(A)$ for every  measurable set $A$, unless e.g.
the sequence  $(m_n)_n$ is decreasing or increasing.
 So other types of convergence are studied, based on  the structure of the topological space $\Omega$,
  such as the vague and the weak convergence which are,  in general, weaker than the setwise.
These convergences  are useful, for example,  from the point of view of
applications on non-interactive particle systems (see \cite{
Cox2, Ma}). 

In the present  paper we continue the research started in  \cite{dmms}
and we  provide sufficient conditions in order to  obtain    Vitali's type  convergence results for a sequence of
(multi)functions $(f_n)_n$  integrable  with respect to a sequence  $(m_n)_n$ of measures when  $(m_n)_n$ converges
 vaguely or weakly to a finite measure $m$.\\
The known results, in literature, as far as we know, require that the topological space $\Omega$,
endowed with the Borel $\sigma$-algebra is a  metric  space (\cite{Fein-arX1807,Fein-arX1902}),
or a locally compact  space which is also:  separable and metric  ( \cite{lerma}),
metrizable (\cite{Kall}) or Hausdorff  second countable (\cite{serfozo}). An interesting comparison among all these results is given in \cite{Ma}.

In the present paper, following the ideas of Bogachev (\cite{Bog}),   we assume that $\Omega$ is only  an arbitrary  locally compact  Hausdorff space.
The paper is organized as follows: in Section \ref{sec2} the topological structure of the space $\Omega$ is introduced
 together with the convergence types considered and some of their properties.
In Section \ref{sec3} the scalar case is studied;
the main result of this section is Theorem \ref{Th1-4},  where we obtain the convergence of the integrals
 $(\int f_n dm_n)_n$ over arbitrary Borel sets under suitable conditions.
In Section  \ref{sec4}  Theorem \ref{Th1-4}  is  applied in order to obtain analogous results for the multivalued case,
obtaining as a corollary also the vector case. In both cases the Pettis integrability of the integrands is considered.
Finally, adding a condition as in \cite[Theorem 3.2]{dmms} we obtain a convergence result for (multi)functions
in Proposition \ref{Thmulti2bis} on measurable spaces.

\section{Topological case, preliminaries}\label{sec2}
Let  $\Omega$ be a locally compact
Hausdorff space and  let $ \mathcal{B}$ be its
Borel  $\sigma$-algebra.
The symbol $\mathcal{F}(\Omega)$  indicates the class of all
$\mathcal{B}$-measurable functions  $f:\Omega \to \mathbb{R}$.
 We denote by
$C(\Omega)$, $C_0(\Omega)$, $C_c(\Omega)$ and $C_b(\Omega)$ respectively the family of all continuous functions,
 and the subfamilies of all continuous functions  that vanish at infinity, have compact support,
  are  bounded.\\
  
Throughout, we will use  Urysohn's Lemma in the form  (\cite[Lemma 2.12]{rudin}): 
\begin{itemize}
	\item  If $K$ is compact and $U\supset{K}$ is open in a locally compact $\Omega$, 
	then there exists  $f:\Omega\to[0,1]$,  $f\in C_c(\Omega)$, such that $\chi_K\leq{f}\leq  \chi_U$.
\end{itemize}
 All the measures we will consider on $(\Omega, \mathcal{B})$ are finite and by  $\mathcal{M}(\Omega)$ we denote the family of {\it finite nonnegative measures}.  
As usual  a  measure $m \in \mathcal{M}(\Omega)$ is Radon if
it is inner regular in the sense of approximation by compact sets.\\

We recall the following definitions of convergence for  measures.\\

\begin{definition}\label{v-w} \rm
\rm Let $m$ and  $m_n$ be in $\mathcal{M}(\Omega)$. We say that
\begin{description}
\item[\ref{v-w}.a)]
$(m_n)_n$ {\it converges vaguely  } to $m$ ($m_n \xrightarrow[]{v}m$)  (\cite[Section 2.3]{lerma})
 if
\[ \int_{\Omega} g dm_n \to \int_{\Omega}  g dm, \quad \mbox{for every  }  g  \in C_0(\Omega).\]
\item[\ref{v-w}.b)]
$(m_n)_n$   {\it  converges weakly}  to $m$  ($m_n \xrightarrow[]{w} m$) (\cite[Section 2.1]{lerma})
 if
\[ \int_{\Omega} g dm_n \to \int_{\Omega}  g dm, \quad \mbox{for every  }   g  \in C_b(\Omega).\]
\item[\ref{v-w}.c)]
 {\it   $(m_n)_n$ {\it converges   setwisely} to $m$ ($m_n \xrightarrow[]{s}  m$)	 if \,
$\lim_n m_n (A) = m(A)$ for every $A \in \mathcal{B}$  {\rm (\hskip-.03cm \cite[Section 2.1]{lerma},
\cite[Definition 2.3]{Fein-arX1902})}} or,
 equivalently (\cite{Ma}), if
\[ \int_{\Omega} g dm_n \to \int_{\Omega}  g dm, \quad \mbox{for every bounded }   g \in \mathcal{F}(\Omega).\]
\item[\ref{v-w}.d)]
$(m_n)_n$ is {\it uniformly absolutely continuous with respect to $m$} if for each
$\varepsilon > 0$ there exists $\delta > 0$ such that
\begin{equation}
\left(E \in  \mathcal{B} \quad \mbox{and} \ \  m(E)<\delta \right)  \,\, \Longrightarrow \,\, \sup_n  m_n (E) < \varepsilon. 
\label{auc2}
\end{equation}
We would like to note that the condition $m_n \leq  m$, for every $n \in\N$, implies  that
$(m_n)_n$ is uniformly absolutely continuous with respect to $m$.
\end{description}
\end{definition}

\begin{remark}\label{RWV}
\rm As observed in \cite{lerma} the setwise convergence is stronger than  the vague and  the weak convergence.
For the converse implications we know, by \cite[Lemma 4.1 (ii)]{lasserre}, that
if $(m_n)_n$ is a sequence in   $\mathcal{M}(\Omega)$  with $m_n \leq  m$, where $m \in  \mathcal{M}(\Omega)$  and
 $(m_n)_n$  converges vaguely to $m$, then $(m_n)_n$  converges  setwisely  to $m$.
If $m$ is  $\overline{\mathbb{R}}$-valued this is not true in general, see for example  \cite[page 143]{lerma}.
The weak convergence is stronger than the vague convergence; as an example we can consider $m_n:=\delta_n $
 (the Dirac measure at the point $x=n$) and $m:=0$. The sequence $(m_n)_n$ converges vaguely to $m$,
 but since $m_n(\mathbb{R})=1 \not\to 0 = m(\mathbb{R})$ the convergence cannot be weak.\\\
Moreover we note that if $(m_n)_n$  converges weakly to $m$, then  $m_n(\Omega) \to m(\Omega)$ (it is enough to take $g=1$ in the definition).\\
 \end{remark}
We  have
\begin{proposition}\label{pw}
Let  $m_n$, $n \in \mathbb{N}$, and  $ m$ be  in $\mathcal{M}(\Omega)$, with $m$ Radon.
If $(m_n)_n $  is uniformly absolutely continuous with respect to $m$ and $(m_n)_n$ is vaguely  convergent to $m$, then
$(m_n)_n$ is weakly convergent to $m$.

\end{proposition}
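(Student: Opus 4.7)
The plan is to reduce the test function class from $C_b(\Omega)$ to $C_c(\Omega)\subset C_0(\Omega)$ by cutting off the tails of $g$. Uniform absolute continuity combined with Radon inner regularity will allow us to control the tails uniformly in $n$, and vague convergence will take care of the compactly supported piece.

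Fix $g\in C_b(\Omega)$, set $M:=\|g\|_\infty+1$ (to avoid $M=0$), and let $\varepsilon>0$. First I would apply condition \ref{v-w}.d) to pick $\delta>0$ with
\[
m(E)<\delta \quad\Longrightarrow\quad \sup_n m_n(E)<\varepsilon/M.
\]
Since $m$ is Radon and finite, inner regularity produces a compact set $K\subset\Omega$ with $m(\Omega\setminus K)<\min\{\delta,\varepsilon/M\}$. Because $\Omega$ is locally compact Hausdorff, $K$ admits an open neighbourhood with compact closure, so Urysohn's Lemma (as stated in the excerpt) yields $\varphi\in C_c(\Omega)$ with $\chi_K\le\varphi\le 1$.

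Next I would decompose $g=g\varphi+g(1-\varphi)$ and estimate
\[
\Bigl|\int_\Omega g\,dm_n-\int_\Omega g\,dm\Bigr|
\le \Bigl|\int_\Omega g\varphi\,dm_n-\int_\Omega g\varphi\,dm\Bigr|
+\Bigl|\int_\Omega g(1-\varphi)\,dm_n\Bigr|
+\Bigl|\int_\Omega g(1-\varphi)\,dm\Bigr|.
\]
The function $g\varphi$ lies in $C_c(\Omega)\subset C_0(\Omega)$, so vague convergence forces the first term to be $<\varepsilon$ for all $n$ large enough. For the remaining two terms, note $0\le 1-\varphi\le \chi_{\Omega\setminus K}$, hence $|g(1-\varphi)|\le M\chi_{\Omega\setminus K}$. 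Thus the second term is bounded by $M\cdot m_n(\Omega\setminus K)<\varepsilon$ by the choice of $\delta$, and the third by $M\cdot m(\Omega\setminus K)<\varepsilon$.

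Combining the three estimates gives $|\int g\,dm_n-\int g\,dm|<3\varepsilon$ for all sufficiently large $n$, which establishes $m_n\xrightarrow{w}m$. The only genuinely delicate point is ensuring the cutoff $\varphi$ exists with compact support and values in $[0,1]$, which is exactly what the local compactness of $\Omega$ and the stated form of Urysohn's Lemma deliver; the rest is bookkeeping.
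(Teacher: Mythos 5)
Your proof is correct and follows essentially the same route as the paper's: a Urysohn cutoff $\varphi$ supported near a compact set $K$ with $m(K^c)$ small, vague convergence applied to $g\varphi\in C_c(\Omega)\subset C_0(\Omega)$, and uniform absolute continuity to control the tails $\int g(1-\varphi)\,dm_n$ uniformly in $n$. The only cosmetic difference is that you bound the $m$-tail by $\|g\|_\infty\, m(\Omega\setminus K)$ directly rather than invoking absolute continuity of $\int|g|\,dm$; both are fine.
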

\begin{proof}
We fix $\ve>0$ and let  $f\in{C_b(\Omega)}$. We set 
\(c := \max \{1,\sup_{\Omega} \vert f(\omega)\vert  \} \), let
 $\delta \in ]0, \varepsilon[$ be taken in such a way that  if $E$ is a Borel set with  $m(E)<\delta$, then
\[
\max \left\{\int_E\vert f\vert \,dm, \, \sup_n m_n(E) \right\}<\ve.\]
Let $K$ be a compact set such that $m(K^c)<\delta$. Then by   Urysohn's Lemma  let $h:\Omega\to [0,1]$
be a continuous function  with compact support such that $h(\omega)=1$ for $\omega\in{K}$. Let $g:=f\cdot{h}$.
 Then $g\in{C_0(\Omega)}$. We have for sufficiently large $n\in\N$, depending on the vaguely convergence,
\begin{align}
 & \Big\vert  \int_{\Omega} f dm - \int_{\Omega} f dm_n  \Big\vert  \nonumber \\
	 &\leq \int_{\Omega} \vert f - g\vert  dm +\int_{\Omega} \vert f - g\vert  dm_n + 
	\Big\vert  \int_{\Omega} g \ d m - \int_{\Omega} g \ dm_n \Big\vert  \nonumber \\
 &=  \int_{K^c} \vert f\vert  \cdot  \vert 1-h\vert \,dm+\int_{K^c} \vert f\vert \cdot \vert 1-h\vert \,dm_n+
 \Big\vert  \int_{\Omega} g \ d m - \int_{\Omega} g \ dm_n \Big\vert 
\nonumber\\
 &\leq
\varepsilon (c +2). \nonumber
 \end{align}
 \end{proof}
For other relations among weak or vague convergence and setwise convergence see \cite[Lemma 4.1]{lasserre}. Moreover

\begin{proposition}\label{L4}
Let  $m_n$, $n \in \mathbb{N}$, and  $m$ be in   $\mathcal{M}(\Omega)$ with $m$ Radon.
If $m_n \leq m$, for every $n \in\N$,   and $(m_n)_n$ is vaguely  convergent to $m$, then for every
$f\in{L^1(m)}$ and  $A \in \mathcal{B}$
\begin{equation}
 \lim_n \int_{A} f\,dm_n =  \int_{A} f\,dm.
\label{Prop setwise}
  \end{equation}
In particular $(m_n)_n$ converges to $m$ setwisely.
\end{proposition}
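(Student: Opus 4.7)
The plan is to first establish setwise convergence $m_n \xrightarrow[]{s} m$ and then to extend integral convergence from bounded Borel functions to all of $L^1(m)$ via truncation. Since $m_n \leq m$ for every $n$, the sequence $(m_n)_n$ is trivially uniformly absolutely continuous with respect to $m$ (as noted after Definition \ref{v-w}.d), and Proposition \ref{pw} yields weak convergence $m_n \xrightarrow[]{w} m$; in particular $m_n(\Omega) \to m(\Omega)$.

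The key squeeze for setwise convergence is the following: for every $A \in \mathcal{B}$, the two nonnegative quantities $m(A) - m_n(A)$ and $m(\Omega \setminus A) - m_n(\Omega \setminus A)$ add up to $m(\Omega) - m_n(\Omega)$, which tends to $0$. Hence each summand vanishes in the limit and $m_n(A) \to m(A)$ for every $A \in \mathcal{B}$. This matches the conclusion of \cite[Lemma 4.1(ii)]{lasserre} already invoked in Remark \ref{RWV}, and gives the ``in particular'' part of the statement.

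With setwise convergence in hand, I would first treat bounded Borel $f$: approximate $f\chi_A$ uniformly by a simple function $g_k$ with $\|f\chi_A - g_k\|_\infty < \varepsilon$, and use linearity together with setwise convergence of $m_n$ on each of the finitely many level sets of $g_k$ to conclude
\[ \left| \int_A f \, dm_n - \int_A f \, dm \right| \leq 2\varepsilon\, m(\Omega) + \left| \int_{\Omega} g_k \, dm_n - \int_{\Omega} g_k \, dm \right|, \]
which is eventually as small as desired. For general $f \in L^1(m)$, set $f_N := f \chi_{\{|f|\leq N\}}$; the domination $m_n \leq m$ gives
\[ \left| \int_A (f - f_N)\,dm_n \right| \leq \int_{\Omega} |f - f_N|\,dm_n \leq \int_{\Omega} |f - f_N|\,dm, \]
and the analogous bound holds for $m$. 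By dominated convergence the right-hand side tends to $0$ as $N \to \infty$, uniformly in $n$, so combining with the bounded case proves \eqref{Prop setwise}.

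The main obstacle is the first step: turning vague (hence, by Proposition \ref{pw}, weak) convergence into setwise convergence. The trick is genuinely to use the pointwise bound $m_n \leq m$, which forces the individual set-deficits $m(A) - m_n(A)$ and $m(\Omega \setminus A) - m_n(\Omega \setminus A)$ to be nonnegative, so that the already-established convergence of total masses controls each of them. Everything after that is a standard simple-function-plus-truncation routine made uniform in $n$ by the same domination $m_n \leq m$.
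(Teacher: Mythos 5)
Your proof is correct, but it takes a genuinely different route from the paper's. The paper never passes through setwise convergence as an intermediate step: it fixes $f\in L^1(m)$, picks $g\in C_c(\Omega)$ with $\int_\Omega|f-g|\,dm<\varepsilon/3$ (density of $C_c(\Omega)$ in $L^1$ of a finite Radon measure), transfers this bound to every $m_n$ via the domination $\int_\Omega|f-g|\,dm_n\le\int_\Omega|f-g|\,dm$, and then applies vague convergence directly to $g$; setwise convergence is deduced \emph{afterwards} as the special case $f=1$. You reverse the logical order: you first establish setwise convergence by the mass-conservation squeeze $\bigl(m(A)-m_n(A)\bigr)+\bigl(m(\Omega\setminus A)-m_n(\Omega\setminus A)\bigr)=m(\Omega)-m_n(\Omega)\to 0$ with both summands nonnegative (in effect re-proving the cited Lemma 4.1(ii) of Lasserre, with $m_n(\Omega)\to m(\Omega)$ supplied by Proposition \ref{pw}), and then run a purely measure-theoretic approximation — uniform simple-function approximation for bounded $f$, followed by truncation $f_N=f\chi_{\{|f|\le N\}}$ made uniform in $n$ by the same domination. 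What your approach buys is that the only topological input is the single scalar fact $m_n(\Omega)\to m(\Omega)$; everything else is abstract measure theory, and you avoid invoking density of $C_c(\Omega)$ in $L^1(m)$ (a Lusin-type regularity fact). What the paper's approach buys is brevity: one $C_c$ approximation plus one application of vague convergence settles the whole statement in a few lines. Both arguments are complete and correct.
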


\begin{proof}
 Let $f\in{L^1(m)}$ be fixed. Given  $\varepsilon > 0$ there exists  $g \in C_c(\Omega)$
 such that
\begin{equation}
\int_{\Omega}\vert f-g\vert  dm_n \leq \int_{\Omega}\vert f-g\vert  dm <
\frac{ \varepsilon}{3}.
\label{e1}
\end{equation}
Moreover, since $(m_n)_n$ is vaguely  convergent to $m$, let
$N(\varepsilon/3) $ be such that
 \begin{equation}
\Big\vert  \int_{\Omega} g \ dm - \int_{\Omega} g \ dm_n \Big\vert   < \frac{\varepsilon}{3}
\label{e2}
 \end{equation}
 for $n>N$.
 Therefore by \eqref{e1}  and \eqref{e2} for $n>N$ we obtain
 \begin{align}
& \left\vert  \int_{\Omega} f dm - \int_{\Omega} f dm_n  \right\vert  \nonumber \\
&\leq \int_{\Omega} \vert f - g\vert dm +\int_{\Omega} \vert f - g\vert dm_n +
\left\vert  \int_{\Omega} g \ dm- \int_{\Omega} g \ dm_n \right\vert   < \varepsilon. \nonumber
 \end{align}
Now if $A \in \mathcal{B}$, also $f\chi_A \in L^1(m)$ and (\ref{Prop setwise}) follows. In particular  \mbox{$m_n \xrightarrow[]{s}  m$.}
\end{proof}

Results of the previous type are contained for example in \cite[Proposition 2.3]{lerma} for the setwise convergence when  the  measures $m_n$  are equibounded by  a measure $\nu$ for non negative   $f \in L^1(\nu)$    or in
 \cite[Proposition 2.4]{lerma} under the additional hypothesis of separability of $\Omega$ for non negative and
lower semicontinuous functions $f$. \\

We now introduce the following definition

\begin{definition}\label{a-c}{\rm}
	\rm Let  $(m_n)_n$ be  a sequence in $\mathcal{M}(\Omega)$. We say that:
\begin{description}	
\item[\ref{a-c}.a)]
	A  sequence  $(f_n)_n \subset \mathcal{F}(\Omega)$
	has {\it  uniformly absolutely   continuous  $(m_n)$-integrals   on $\varOmega$},
	if for every $ \varepsilon  >0$ there exists $\delta >0$ such that 	
	 for every $n \in \mathbb{N}$			
	\begin{equation}
			\left(A \in  \mathcal{B} \quad \mbox{and} \ \
		 m_n(A)<\delta \right)  \ \ \Longrightarrow \ \ \int_A\vert f_n\vert \,dm_n < \varepsilon.
\label{Pettis1}
	\end{equation}
	Analogously a function  $f \in  \mathcal{F}(\Omega)$
	has {\it  uniformly absolutely   continuous  $(m_n)$-integrals   on $\varOmega$} if previous condition (\ref{Pettis1})
	 holds for  $f_n:=f$ for every $n \in \mathbb{N}$.		
\item[\ref{a-c}.b)] A  sequence $(f_n)_n \subset \mathcal{F}(\Omega)$
	is  {\it  uniformly $(m_n)$-integrable on $\varOmega$}  if
	\begin{equation}
		\lim_{\alpha  \to +\infty } \sup_n \int_{\vert f_n\vert  >\alpha} \vert f_n\vert \, dm_n \  =0.
	\label{unintegr}
	\end{equation}
\end{description}		
\end{definition}

\begin{remark}\label{2.6jmaa}
\rm
 As we observed in \cite[Proposition 2.6]{dmms} if $(m_n)_n$ is  a bounded sequence of  measures
and 	 $(f_n)_n \subset \mathcal{F}$, then,  $(f_n)_n$
 is   uniformly $(m_n)$-integrable on $\vO$ if and only if it has uniformly absolutely continuous
  $(m_n)$-integrals and
\begin{equation}
	\sup_n \int_{\Omega} \vert f_n\vert \, dm_n < +\infty\,.
		\label{e12}
\end{equation}
\end{remark}

\section{The scalar case}\label{sec3}

\begin{proposition}\label{p1}
Let  $(m_n)_n$  be a sequence in $\mathcal{M}(\Omega) $   which is uniformly absolutely continuous with
 respect to  a Radon measure  $m \in \mathcal{M}(\Omega)$  and vaguely  convergent to  $m$.
 Let  $f \in C(\Omega)$ be a function  which
 has uniformly absolutely continuous $(m_n)$-integrals on $\vO$. Then,
\begin{equation}
	\sup_n \int_{\Omega} \vert f \vert  \, dm_n < +\infty\,.
	\label{e6}
\end{equation}
\end{proposition}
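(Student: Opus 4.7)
The plan is to split $\Omega$ into a compact set where continuity of $f$ gives a bound and its complement where the uniform absolute continuity of the $(m_n)$-integrals of $f$ controls $\int|f|\,dm_n$. The first ingredient needed is an a priori bound $\sup_n m_n(\Omega) < \infty$. This comes directly from the previous Proposition \ref{pw}: the hypotheses of the present statement (uniform absolute continuity of $(m_n)_n$ with respect to the Radon measure $m$, together with vague convergence $m_n \xrightarrow{v} m$) are exactly what that proposition requires, so $m_n \xrightarrow{w} m$. Testing weak convergence against $g\equiv 1 \in C_b(\Omega)$ (cf.\ Remark \ref{RWV}) gives $m_n(\Omega)\to m(\Omega)$, so in particular $M_0 := \sup_n m_n(\Omega) < \infty$.

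Next I would chain the two uniform absolute continuity properties to transfer the smallness of $m$ on sets of small measure into smallness of $\int|f|\,dm_n$. Fix $\varepsilon = 1$ in Definition \ref{a-c}.a) to obtain $\delta_1 > 0$ such that $m_n(A) < \delta_1$ implies $\int_A |f|\,dm_n < 1$ for every $n$. Then apply \eqref{auc2} with $\varepsilon = \delta_1$ to obtain $\delta_2 > 0$ such that $m(E) < \delta_2$ forces $\sup_n m_n(E) < \delta_1$. Using that $m$ is Radon and finite, there is a compact set $K \subset \Omega$ with $m(K^c) < \delta_2$, and chaining the implications yields
\[
\sup_n \int_{K^c} |f|\, dm_n \ \leq\ 1.
\]

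For the contribution over $K$ I would use that $f$ is continuous on the compact set $K$, so $M_K := \max_{\omega \in K} |f(\omega)| < \infty$, and consequently
\[
\int_K |f|\, dm_n \ \leq\ M_K \cdot m_n(K) \ \leq\ M_K \cdot M_0.
\]
Adding the two estimates gives $\sup_n \int_\Omega |f|\, dm_n \leq 1 + M_K M_0 < \infty$, which is the claim.

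The only non-routine point is the uniform bound $\sup_n m_n(\Omega) < \infty$, and this is where Proposition \ref{pw} does the real work; alternatively, one could avoid invoking it by dominating $\chi_K$ via Urysohn's lemma by some $h \in C_c(\Omega)$ and passing to the vague limit to bound $m_n(K)$, while handling $m_n(K^c)$ directly by uniform absolute continuity. Once a global bound on $m_n(\Omega)$ is in hand, the compact/non-compact splitting argument is elementary.
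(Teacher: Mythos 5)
Your proof is correct and follows essentially the same strategy as the paper: split $\Omega$ into a compact $K$ with $m(K^c)$ small and chain the two uniform absolute continuity hypotheses to bound $\sup_n\int_{K^c}|f|\,dm_n$. The only (harmless) variation is on the compact part, where the paper bounds $\int_K|f|\,dm_n\le\int_\Omega |f|h\,dm_n$ with $h\in C_c(\Omega)$ from Urysohn's Lemma and uses that the vaguely convergent sequence $\bigl(\int_\Omega |f|h\,dm_n\bigr)_n$ is bounded, whereas you bound $|f|$ by its maximum on $K$ and invoke Proposition \ref{pw} to get $\sup_n m_n(\Omega)<\infty$ — both are legitimate and rest on the same ingredients.
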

\begin{proof}
 Let  $\varepsilon>0$ be fixed and  let $\sigma=\sigma(\varepsilon)$ be that of the uniform absolutely continuous $(m_n)$-integrability of $f$ as
in formula  \eqref{Pettis1} (with $f_n=f$ for each $n \in \mathbb{N}$). 
Moreover let  $\delta= \delta(\sigma) >0$ be  that of the  uniform absolute continuity
 of $(m_n)_n$ with respect to $m$, as in formula  (\ref{auc2}).

 Since $m$ is Radon, there is a compact set $K$ such that $m(\Omega \setminus K)< \delta$.
By Urysohn's Lemma there exists a continuous function  $h:\Omega\to [0,1]$
with compact support such that $h(\omega)=1$ for $\omega\in{K}$. Let $g:=\vert f\vert \cdot{h}$.
 Then $g\in{C_0(\Omega)}$.
Hence
\[
\int_{\Omega}\vert f\vert \,dm_n  \leq \int_{K}\vert f\vert \,dm_n+\int_{\Omega \setminus K} \vert f\vert \,  dm_n
 \leq \int_{\Omega}g\,dm_n+\varepsilon.
\]
Since     $(m_n)_n$ converges vaguely to $m$,  then
\[\int_{\Omega}g\,dm_n\longrightarrow \int_{\Omega}g\,dm<+\infty.\]
 Hence
\[
\sup_n\int_{K}\vert f\vert \,dm_n\leq\sup_n \int_{\Omega}g\,dm_n<+\infty.
\]
\end{proof}
\begin{proposition}\label{p2}
Let  $(m_n)_n$  be a sequence in  $\mathcal{M}(\Omega)$   which is uniformly absolutely continuous
with respect to a Radon measure  $m \in \mathcal{M}(\Omega)$   and vaguely  convergent to  $m$.
Moreover let   $f \in C(\Omega)$ be a function   which
 has uniformly  absolutely continuous $(m_n)$-integrals on $\varOmega$.
Then  $f \in L^1(m)$ and
\begin{equation}
	 \lim_n \int_{\Omega} f\,dm_n =  \int_{\Omega} f\,dm.
	\label{convf}
\end{equation}
\end{proposition}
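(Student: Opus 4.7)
The plan is to proceed in two stages: first establish that $f \in L^1(m)$, then mimic the proof of Proposition \ref{L4}, using Proposition \ref{p1} together with a careful truncation at a compact set.

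\textbf{Step 1: $f \in L^1(m)$.} By Proposition \ref{p1}, $M := \sup_n \int_\Omega |f|\,dm_n < +\infty$. To transfer this bound to $m$, fix an arbitrary compact $K \subset \Omega$ and, via Urysohn's Lemma, choose $h_K \in C_c(\Omega)$ with $\chi_K \leq h_K \leq 1$. Then $|f|\cdot h_K \in C_c(\Omega) \subset C_0(\Omega)$, so by vague convergence
\[
\int_K |f|\,dm \leq \int_\Omega |f|\,h_K\,dm = \lim_n \int_\Omega |f|\,h_K\,dm_n \leq M.
\]
Since $m$ is Radon, taking the supremum over compact $K$ yields $\int_\Omega |f|\,dm \leq M$.

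\textbf{Step 2: Convergence of the integrals.} Fix $\varepsilon > 0$. Let $\sigma = \sigma(\varepsilon)$ come from the uniformly absolutely continuous $(m_n)$-integrability of $f$, and let $\delta = \delta(\sigma)$ come from the uniform absolute continuity of $(m_n)_n$ with respect to $m$. Since $f \in L^1(m)$, shrink $\delta$ if necessary so that $m(E) < \delta$ also forces $\int_E |f|\,dm < \varepsilon$. By Radonness of $m$, pick a compact $K$ with $m(\Omega \setminus K) < \delta$; then $\sup_n m_n(\Omega \setminus K) < \sigma$, whence
\[
\sup_n \int_{\Omega \setminus K} |f|\,dm_n < \varepsilon \quad \text{and} \quad \int_{\Omega \setminus K} |f|\,dm < \varepsilon.
\]
By Urysohn, take $h \in C_c(\Omega)$ with $\chi_K \leq h \leq 1$, and set $g := f \cdot h \in C_c(\Omega) \subset C_0(\Omega)$. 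Then $|f - g| = |f|(1-h)$ vanishes on $K$ and is bounded by $|f|$, so
\[
\int_\Omega |f-g|\,dm \leq \varepsilon, \qquad \sup_n \int_\Omega |f-g|\,dm_n \leq \varepsilon.
\]

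\textbf{Step 3: Conclusion.} Splitting in the standard way and using vague convergence for $g$, for $n$ sufficiently large
\[
\left|\int_\Omega f\,dm - \int_\Omega f\,dm_n\right| \leq \int_\Omega |f-g|\,dm + \int_\Omega |f-g|\,dm_n + \left|\int_\Omega g\,dm - \int_\Omega g\,dm_n\right| < 3\varepsilon.
\]
Since $\varepsilon$ is arbitrary, \eqref{convf} follows.

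The main obstacle is Step 1: a priori $f$ need not be locally integrable with respect to $m$, and we have no a priori control of $\int |f|\,dm$ except through the uniformly absolutely continuous $(m_n)$-integrability assumption. The trick is to exploit vague convergence of $m_n$ to $m$ on the $C_0$ test function $|f|\,h_K$, which is legitimate precisely because $h_K$ has compact support and $f$ is continuous, so that the product lies in $C_c(\Omega)$ regardless of the global growth of $f$.
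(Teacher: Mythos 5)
Your proof is correct. Steps~2 and~3 follow the paper's argument almost verbatim: the same choice of constants from the two uniform absolute continuity hypotheses, the same compact $K$ with $m(K^c)$ small, the same Urysohn truncation $g=fh\in C_c(\Omega)$, and the same three-term triangle inequality closed by vague convergence on $g$. Where you genuinely diverge is in establishing $f\in L^1(m)$. The paper first upgrades vague to weak convergence via Proposition~\ref{pw}, then approximates $|f|$ from below by an increasing sequence $(g_k)_k\subset C_b(\Omega)$ (e.g.\ $g_k=\min(|f|,k)$), bounds $\int_\Omega g_k\,dm$ using weak convergence and Proposition~\ref{p1}, and concludes by the Monotone Convergence Theorem. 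You instead test vague convergence directly against the compactly supported functions $|f|\,h_K\in C_c(\Omega)$ to get $\int_K|f|\,dm\le M$ for every compact $K$, and then pass to the supremum over $K$. This is more economical — it avoids Proposition~\ref{pw} entirely for this step — but the final passage is stated a bit tersely: the identity $\sup_K\int_K|f|\,dm=\int_\Omega|f|\,dm$ is not literally the inner regularity of $m$ but of the indefinite integral of $|f|$; it does follow for a finite Radon $m$ by truncating ($\int_{\Omega\setminus K}\min(|f|,N)\,dm\le N\,m(\Omega\setminus K)$, then let $N\to\infty$ by monotone convergence), and you should record that one line. With that remark added, your argument is complete and, in this sub-step, somewhat more self-contained than the paper's.
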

\begin{proof}
 By Proposition \ref{p1}   $\sup_n \int_{\Omega} \vert f\vert  dm_n < +\infty\,$.
 We denote by $(g_k)_k$
 an increasing sequence  of functions   in $C_b(\Omega)$ such that  $0\leq{g_k}\uparrow \vert f\vert $, $m$ a.e.\\
By Proposition \ref{pw} $(m_n)_n$ is also weakly convergent to $m$. Now fix $k \in \mathbb{N}$.
 Let $N_1(k,1)$ be  such that if $n>N_1$
\begin{equation}
	\int_{\Omega} g_k \, dm -1 < \int_{\Omega} g_k \, dm_n .  \label{e112}
\end{equation}
By Proposition \ref{p1} we infer
\begin{equation}
	\int_{\Omega}g_k dm - 1 <   \int_{\Omega} g_k dm_n \leq  \sup_n \int_{\Omega} \vert f\vert  dm_n < \infty\,. \label{e3}
\end{equation}
So, by the Monotone Convergence Theorem applied to  the sequence  $(g_k)_k$ we obtain $f \in L^1(m)$.\\
We are showing now that (\ref{convf}) holds.
We fix $\sigma > 0$.
Since $f \in L^1(m)$ there exists  a positive  $\delta_0 $ such that
for every $A \in \mathcal{B}$ with $m(A) < \delta_0$ then
\begin{equation}
	\int_A \vert f\vert  dm < \sigma. \label{xm}
\end{equation}
Moreover let $\varepsilon(\sigma)>0$ be that of  the
 uniform absolutely continuous $(m_n)$-integrability of $f$  in $\vO$  (with $f_n=f$ for each $n \in \mathbb{N}$)
and  $\delta= \delta(\varepsilon) \in ] 0, \min \{ \varepsilon, \, \delta_0\}[$ be
 that of the absolute continuity of $(m_n)_n$ with respect to $m$.\\
So, if $m(A) < \delta$ then  $\sup_n m_n (A)<\ve$  and
\begin{equation}
 \sup_n \int_{A} \vert f\vert  dm_n  < \sigma. \label{xmn}
\end{equation}

By Urysohn's Lemma one can find a compact set $K$  with $m(K^c) < \delta$
and a function
$h:\Omega\to[0,1]$
in $C_c(\Omega)$
and equal to $1$ on $K$. So $g:=f\cdot{h} \in C_c(\Omega)$. \\
Since the sequence $(m_n)_n$ is vaguely convergent to $m$,    there is $N_2(\sigma) > N_1$ such that for $n>N_2$
\begin{equation} \label{e113}
\left\vert  \int_{\Omega} g \, dm_n - \int_{\Omega} g \, dm \right\vert  < \sigma
\end{equation}

Then by  \eqref{xmn}, \eqref{e113} and \eqref{xm},  for  $n>N_2 $, we have
\begin{align}
&	\left\vert \int_{\Omega} f\,dm_n - \int_{\Omega} f\,dm\right\vert  \leq \nonumber \\
& \left\vert \int_{\Omega} (f-g)\,dm_n \right\vert +
\left\vert \int_{\Omega} g\,dm_n - \int_{\Omega} g\,dm\right\vert +
\left\vert \int_{\Omega} (g-f)\,dm \right\vert  \leq  \nonumber \\
& \left\vert \int_{\Omega} f(1-h)\,dm_n \right\vert +
\left\vert \int_{\Omega} g\,dm_n - \int_{\Omega} g\,dm\right\vert +
\left\vert \int_{\Omega} f(1-h)\,dm \right\vert  \leq  \nonumber \\
& \int_{K^c} \vert f\vert  \,dm_n +
\sigma+
 \int_{K^c} \vert f\vert  \,dm  < 3 \sigma \nonumber
  \end{align}
  and the thesis follows.
\end{proof}


Now our aim is to obtain a limit result
\begin{equation}
\lim_n \int_{A} f_ndm_n =  \int_{A} fdm, \quad \mbox{ for every} A \in \mathcal{B}. \label{passage}
\end{equation}
For the scalar case,
using a Portmanteau's characterization of the vague convergence in metric spaces (see for example \cite{Kall}),  
sufficient conditions  when  $A=\Omega$,  are given
\begin{itemize}
\item[-] 
in  locally compact second countable and  Hausdorff spaces,
 (\cite[Theorems 3.3 and 3.5]{serfozo}), by Serfozo, for the vague and weak convergence respectively, 
 when the sequence $(f_n)_n$ converges continuously to $f$.
Under a domination condition  in the first result
  while, in the second, the uniform  $(m_n)$-integrability of the sequence $(f_n)_n$, with $f_n \geq 0$ for every $n \in \mathbb{N}$,
 is required;
\item[-]  in  locally compact separable metric spaces  (\cite{lerma})
by Herna\-ndez-Lerma and  Lasserre, obtaining a Fatou result and asking for the convergence of the sequence of measures an
 inequality of the $\liminf$ of the $m_n$ on each Borelian set;
\item[-] in  metric spaces  (\cite{Fein-arX1807,Fein-arX1902}), where  the authors obtained a dominated convergence
   result for sequences of equicontinuous functions $(f_n)_n$ satisfying the uniform $(m_n)$-integrability.\\
\end{itemize}

In Theorem \ref{Th1-4},
taking into account Remarks  \ref{RWV} and \ref{2.6jmaa},
we extend   \cite[Theorem  3.5]{serfozo},
 obtaining a sufficient condition when the convergence is vague, the functions $f_n$ are real valued and using the
 uniformly absolutely   continuous  $(m_n)$-integrability of the sequence $(f_n)_n$.
 Later, in Section \ref{sec4},  we will also extend it to the vector and multivalued cases making use of the Pettis integrability.\\

We assume only that $\Omega$ is a locally compact Hausdorff space and then,
in our setting, $\Omega$ is  a Tychonoff space, i.e. a completely regular Hausdorff space (\cite[Theorem 3.3.1]{engel}).
 So we are able to 
use the following  Portmanteau's characterization of the vague convergence for positive measures given  in \cite{Bog}.\\

\begin{theorem}{\rm (\cite[Corollary 8.1.8 and Remark 8.1.11]{Bog})}\label{Port}
 Let $\Omega $ be an arbitrary completely regular space and let  $m$ and $m_n$, $n \in\N$,
	be  measures in   $\mathcal{M}(\Omega)$ with $m$ Radon and assume that $\lim_{n} m_n(\Omega)=m(\Omega)$.
	Then the following are equivalent:
	\begin{description}
	\item[\ref{Port}.i)] $(m_n)_n$ is  vaguely   convergent to $m$;
	\item[\ref{Port}.ii)] for any closed set $F \subset \Omega$, $\limsup _n m_n(F) \leq m(F)$.
	\end{description}
	\end{theorem}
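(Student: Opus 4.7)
The plan is to establish the two implications separately, exploiting the Radon regularity of $m$, the complete regularity of $\Omega$, and the hypothesis $m_n(\Omega) \to m(\Omega)$ to turn vague convergence into a Portmanteau-type inequality on closed and open sets.

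For the implication \ref{Port}.i) $\Rightarrow$ \ref{Port}.ii), I would fix a closed set $F \subset \Omega$ and $\varepsilon > 0$, and use the inner regularity of the Radon measure $m$ to pick a compact set $K \subset \Omega \setminus F$ with $m(\Omega \setminus F) - m(K) < \varepsilon$. Since $\Omega$ is completely regular Hausdorff, the disjoint compact $K$ and closed $F$ admit a continuous separating function $h : \Omega \to [0,1]$ with $h \equiv 1$ on $K$ and $h \equiv 0$ on $F$. Then $\chi_F \leq 1 - h$ gives
\begin{equation*}
m_n(F) \leq m_n(\Omega) - \int_\Omega h\, dm_n .
\end{equation*}
Taking $\limsup$ and using $m_n(\Omega) \to m(\Omega)$ together with the convergence $\int h\, dm_n \to \int h\, dm$ coming from the vague-convergence hypothesis (after localising, if needed, to a larger compact set supplied by the Radonness of $m$), one gets $\limsup_n m_n(F) \leq m(\Omega) - \int h\, dm \leq m(\Omega) - m(K) \leq m(F) + \varepsilon$, and letting $\varepsilon \to 0$ yields \ref{Port}.ii).

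For the reverse implication \ref{Port}.ii) $\Rightarrow$ \ref{Port}.i), I would first observe that, using $m_n(\Omega) \to m(\Omega)$, the closed-set inequality dualises to $\liminf_n m_n(U) \geq m(U)$ for every open $U$, by writing $m_n(U) = m_n(\Omega) - m_n(\Omega \setminus U)$. For an arbitrary nonnegative test function $g$ in the admissible class I would apply the layer-cake representation
\begin{equation*}
\int_\Omega g\, d\mu = \int_0^{\|g\|_\infty} \mu(\{g > t\})\, dt
\end{equation*}
for both $\mu = m_n$ and $\mu = m$. Since $\{g > t\}$ is open and $\{g \geq t\}$ is closed, Fatou's lemma on the variable $t$, applied to each side, produces the inequalities $\liminf_n \int g\, dm_n \geq \int g\, dm$ and $\limsup_n \int g\, dm_n \leq \int g\, dm$, hence convergence. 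Splitting $g \in C_0(\Omega)$ as $g = g^+ - g^-$ finishes the proof.

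The main obstacle is test-function compatibility in the first implication: the separating function $h$ produced by Urysohn separation in a merely completely regular (rather than locally compact) space need not belong to $C_0(\Omega)$, which is the class covered by the vague-convergence hypothesis. I would handle this by first trapping $K$ inside a much larger compact set $K_0$ with $m(\Omega \setminus K_0)$ small, using the Radonness of $m$, and multiplying $h$ by a cutoff associated with $K_0$; the mass equality $m_n(\Omega) \to m(\Omega)$ then controls the $m_n$-mass outside $K_0$ for all sufficiently large $n$, which is precisely the device that makes the argument work in the general Tychonoff setting and is why Bogachev's formulation is stated at that level of generality.
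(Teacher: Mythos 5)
The paper does not prove Theorem \ref{Port} at all --- it is quoted from Bogachev --- so there is no in-paper argument to compare against; I am judging your proof on its own terms. Your implication (\ref{Port}.ii) $\Rightarrow$ (\ref{Port}.i) is correct and complete: the dualisation to $\liminf_n m_n(U)\ge m(U)$ for open $U$ uses only $m_n(\Omega)\to m(\Omega)$; the layer-cake identity holds with either $\{g>t\}$ or $\{g\ge t\}$ (the two integrands in $t$ differ on an at most countable set of levels); Fatou gives the $\liminf$ inequality from the open sets; and the reverse Fatou step is legitimate because $\sup_n m_n(\Omega)<\infty$ (again from mass convergence) supplies the dominating function on $[0,\|g\|_\infty]$. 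This direction needs no topological hypotheses beyond Borel measurability.

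The only delicate point is the one you yourself flag in (\ref{Port}.i) $\Rightarrow$ (\ref{Port}.ii), and your repair is slightly off. First, in a merely completely regular space there is in general no way to force the separating function into $C_0(\Omega)$, so with the paper's $C_0$-definition of vague convergence this implication genuinely requires local compactness; that is in fact the setting in which the paper invokes the theorem, and moreover after Proposition \ref{pw} it already has weak convergence in hand, for which your original $h\in C_b(\Omega)$ is an admissible test function and no localisation is needed (Bogachev's cited statement is about weak convergence). Second, in the locally compact case the fix is simpler than the tightness device you describe: the claim that $m_n(\Omega)\to m(\Omega)$ ``controls the $m_n$-mass outside $K_0$'' is not true by itself --- mass convergence says nothing about where the mass of $m_n$ sits, and extracting asymptotic tightness would itself require testing vague convergence against a cutoff. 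But no such control is needed here: take $\varphi\in C_c(\Omega)$ with $\chi_K\le\varphi\le 1$ by Urysohn's Lemma and set $\psi:=h\varphi\in C_c(\Omega)$; then $\psi=1$ on $K$ and $\psi=0$ on $F$, so $m_n(F)\le m_n(\Omega)-\int_\Omega\psi\,dm_n$, and vague convergence together with $\int_\Omega\psi\,dm\ge m(K)>m(\Omega)-m(F)-\varepsilon$ finishes the estimate exactly as in your display. With that replacement the argument is correct.
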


So we have
	
\begin{theorem}\label{Th1-4}

 Let $m$ and $m_n$, $n \in\N$,	be
 measures in   $\mathcal{M}(\Omega)$,  with $m$    Radon.  Let $f, f_n \in \mathcal{F}(\Omega)$.
	Suppose that
	\begin{description} 				
		\item[(\ref{Th1-4}.i)]  $f_n(t)  \rightarrow f(t)$,  $m$-a.e.;
		\item[(\ref{Th1-4}.ii)]    $f \in C(\Omega)$;
	\item[(\ref{Th1-4}.iii)]
$(f_n)_n$ and $f$  have uniformly absolutely   continuous  $(m_n)$-integrals  on $\varOmega$; 	
		\item[(\ref{Th1-4}.iv)] $(m_n)_n$ is  vaguely   convergent to $m$ and
uniformly  absolutely continuous with respect to $m$.
	\end{description} 	
	Then  $f \in L^1(m)$ and
	\begin{equation}
		 \lim_n \int_{A} f_ndm_n =  \int_{A} fdm \quad \mbox{ for every  } A \in \mathcal{B}. \label{conv4-4}
	\end{equation}
	
\end{theorem}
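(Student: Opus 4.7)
The plan is to decompose, for each Borel $A \in \mathcal{B}$,
\[
\int_A f_n\,dm_n - \int_A f\,dm = \int_A (f_n-f)\,dm_n + \Bigl(\int_A f\,dm_n - \int_A f\,dm\Bigr),
\]
and control the two pieces independently. Hypothesis (iii) applied to $f$ alone, combined with (iv), places $f$ inside the scope of Proposition \ref{p2}, so one immediately gets $f \in L^1(m)$ together with (\ref{convf}); moreover Proposition \ref{pw} yields weak convergence of $(m_n)_n$ to $m$, so in particular $m_n(\Omega)\to m(\Omega)$ and $C:=\sup_n m_n(\Omega)<\infty$.

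For the second bracket, with $A$ an arbitrary Borel set, the obstacle is that $f\chi_A$ need not be continuous, so Proposition \ref{p2} does not apply directly. My remedy uses the Radonness of $m$: given $\sigma>0$, sandwich $A$ between a compact $K$ and an open $U$ with $m(U\setminus K)$ so small that, by absolute continuity of $\int|f|\,dm$ and by hypothesis (iii) for $f$ combined with hypothesis (iv), one has $\int_{U\setminus K}|f|\,dm<\sigma$ and $\sup_n\int_{U\setminus K}|f|\,dm_n<\sigma$. Urysohn's Lemma produces $h\in C_c(\Omega)$ with $\chi_K\le h\le \chi_U$; then $fh\in C_c(\Omega)\subset C_0(\Omega)$, so vague convergence gives $\int fh\,dm_n\to \int fh\,dm$, and replacing $\chi_A$ by $h$ costs at most $\int_{U\setminus K}|f|\,dm_n$ and $\int_{U\setminus K}|f|\,dm$ on the respective sides, hence at most $2\sigma$ in total.

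For the first piece $\int_A (f_n-f)\,dm_n$ I would apply Egorov's theorem on the finite measure space $(\Omega,\mathcal{B},m)$ to the $m$-a.e.\ convergence $f_n\to f$. Given $\sigma>0$, pick $E\in\mathcal{B}$ with $m(E)$ small enough that (a) $f_n\to f$ uniformly on $\Omega\setminus E$, and (b) by the uniform absolute continuity of $(m_n)_n$ w.r.t.\ $m$, $\sup_n m_n(E)$ is small enough that hypothesis (iii) applied to both $(f_n)_n$ and $f$ forces $\sup_n\int_E(|f_n|+|f|)\,dm_n<2\sigma$. Then $\int_{A\cap E}|f_n-f|\,dm_n\le 2\sigma$ for every $n$, while $\int_{A\setminus E}|f_n-f|\,dm_n\le C\,\sup_{\Omega\setminus E}|f_n-f|\to 0$ by uniform convergence and the boundedness of $m_n(\Omega)$.

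Putting the two estimates together and letting $\sigma\to 0$ yields (\ref{conv4-4}). I expect the main obstacle to be the second bracket: the fact that vague convergence only tests against $C_0$ functions forces the inner-compact/outer-open Urysohn approximation, which in turn is precisely where Radonness of $m$ together with the local compactness of $\Omega$ enters the argument in an essential way; once that step is in place, the first bracket follows from a routine Egorov/uniform-absolute-continuity split.
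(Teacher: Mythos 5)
Your proof is correct, and while it draws on the same toolbox as the paper (Propositions \ref{pw} and \ref{p2}, Egorov, Urysohn, uniform absolute continuity), it is organized quite differently. The paper proves \eqref{conv4-4} by a three-stage bootstrap: first for $A=\Omega$ (Egorov on a \emph{compact} exceptional-complement $K$, with the Portmanteau Theorem \ref{Port} supplying the bound $m_n(K)\le m(K)+1$ needed to control $\int_K|f_n-f|\,dm_n$), then for compact $A$ by applying the case $A=\Omega$ to the auxiliary sequence $(f_ng)_n$ with $g$ a Urysohn function, and finally for Borel $A$ by inner approximation with compact sets. You instead treat an arbitrary Borel $A$ in a single pass: the term $\int_A(f_n-f)\,dm_n$ is handled by Egorov on a merely measurable set of small $m$-measure together with $\sup_n m_n(\Omega)<\infty$, which you correctly extract from the weak convergence given by Proposition \ref{pw}, thereby dispensing with Theorem \ref{Port} altogether; the term $\int_A f\,dm_n-\int_A f\,dm$ is handled by sandwiching $A$ between a compact $K$ and an open $U$ with $m(U\setminus K)$ small --- and outer regularity by open sets does follow from inner regularity by compacts for a finite measure (apply inner regularity to $A^c$), so this step is legitimate. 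Your route avoids having to re-verify the theorem's hypotheses for the products $f_ng$ (which the paper asserts without detail in its Step 2) and is, if anything, slightly more economical; both arguments use the Radonness of $m$ and the local compactness of $\Omega$ in the same essential places, namely the compact/open approximation feeding into Urysohn's Lemma and the $C_0$ test functions of vague convergence.
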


\begin{proof}	
	By Proposition \ref{p2} the function $f \in L^1(m)$.
We proceed by steps.
\begin{description}
\item[Step 1]  We prove \eqref{conv4-4} for $A = \Omega$.\\
 Fix $\varepsilon >0$ and let
	$\delta:= \min\left\{\frac{\varepsilon}{6},   \delta(\frac{\varepsilon}{6}) , \delta_f(\frac{\varepsilon}{6}) \right\} >0$
	where
	$\delta_f(\frac{\varepsilon}{6})$ is that of the absolute continuity of $\int_{.} \vert f\vert  dm$ ,  and
by (\ref{Th1-4}.iii)
	$\delta(\frac{\varepsilon}{6})$ is  that of  (\ref{Pettis1})  for both $(f_n)_n$ and $f$ with respect to $(m_n)_n$.\\	
By the hypothesis (\ref{Th1-4}.iv)    let 	$0<\delta_0 <\delta$ be such that		
	\begin{equation}	
\left(E \in  \mathcal{B} \quad \mbox{and} \ \  m(E)<\delta_0 \right)
	 \,\, \Longrightarrow \,\, \sup_n  m_n (E) < \delta. \label{auc4-qui}
	\end{equation}
	By   the Egoroff's Theorem, we can find a compact set $K$ such that $f_n \rightarrow f$ uniformly on $K$
	and $m(K^c) <  \delta_0$.\\
We observe that by condition \ref{Th1-4}.iv) and by Proposition \ref{pw} $(m_n)_n$ weakly converges to $m$ and then $\lim_nm_n(\Omega)=m(\Omega)$.  So by  Theorem \ref{Port}, let   $N_0 \in\N$ be such that
\begin{equation}
m_n(K) < m(K) + 1 , \label{Pettis200}
\end{equation}
for every $n>N_0$.
Moreover, since the convergence is uniform on $K$, let    $N_1 > N_0 \in\N$ be such that

\begin{equation}
	\vert f_n(t)-f(t)\vert  <  \frac{ \varepsilon }{6(m(K)+1)}  , \label{800}
\end{equation}
for every $t \in K$ and $n>N_1$. Then, for all $n>N_1$,

\begin{equation}
	\int_{K}\vert f_n-f\vert dm < \frac{ \varepsilon }{6}. \label{1100}
\end{equation}

Therefore by (\ref{Pettis200}) and (\ref{800}) we obtain, for every for $n> N_1$,
\begin{equation}
	\int_{K}\vert f_n-f\vert dm_n \leq \frac{ \varepsilon }{6(m(K)+1)} \cdot m_n(K) <
\frac{ \varepsilon }{6}. \label{900}
	 \end{equation}
Since  $m(K^c) < \delta_0 $ by (\ref{auc4-qui}) it follows that  $ m_n (K^c) < \delta $ for every 
$n \in \mathbb{N}$.
Moreover,  by hypothesis \ref{Th1-4}.iii) and by the choice of $\delta$,  we have that

	\begin{equation}
	\max \, \left\{ \int_{K^c} \vert f\vert dm, \,\, \int_{K^c} \vert f_n\vert dm_n, \,\,	\int_{K^c} \vert f\vert dm_n  \right\} < \frac{ \varepsilon}{6}.
	\label{per-f-risp-mn}
	\end{equation}
	
By  Urysohn's Lemma let $h:\Omega\to[0,1]$ be a continuous function with compact support 
 equal to $1$ on $K$.
 Then $g:=f\cdot{h} \in C_c(\Omega)$ and
	by  (\ref{per-f-risp-mn}) 	we  have

	\begin{equation}
	\max \, \left\{ 	\int_{K^c}\vert f-g\vert  dm_n, \,\, \int_{K^c}\vert f-g\vert  dm \right\} < \frac{ \varepsilon}{6}. \label{l200}
	\end{equation}
	
	Moreover, since $(m_n)_n$ is vaguely  convergent to $m$, let $n > N_2 \geq N_1$ be such that
	
	\begin{equation}
		\Big\vert  \int_{\Omega} g \ dm- \int_{\Omega} g \ dm_n \Big\vert  < \frac{ \varepsilon}{6}. \label{Pettis10}
	\end{equation}
	
	Therefore by  (\ref{900})--(\ref{Pettis10})
	for $n > N_2$ we obtain
	\begin{align}\label{eq:3.5.b}
		& \nonumber
\left\vert  \int_{\Omega} f dm -   \int_{\Omega} f_n dm_n \right\vert  \leq
		\left\vert  \int_{\Omega} (f_n -f) dm_n \right\vert  +   \left\vert  \int_{\Omega} f dm - 
		\int_{\Omega} f dm_n  \right\vert  \\
		&\leq \nonumber
 \left\vert  \int_{K} (f_n -f) dm_n \right\vert  +  \int_{K^c} \vert f_n\vert  dm_n + \int_{K^c} \vert f\vert  dm_n +
		 \left\vert  \int_{\Omega} f dm - \int_{\Omega} f dm_n  \right\vert \\
		&\leq   \frac{ \varepsilon}{2}+\left\vert  \int_{\Omega} f dm - \int_{\Omega} f dm_n  \right\vert \\
		&\leq  \nonumber
		\frac{ \varepsilon}{2} +  \int_{K^c} \vert f - g\vert dm +\int_{K^c} \vert f - g\vert dm_n +
		\left\vert  \int_{\Omega} g \ dm- \int_{\Omega} g \ dm_n \right\vert 
		<  \varepsilon
	\end{align}
	so (\ref{conv4-4}) follows for $A=\Omega$.
\item[Step 2] 
Now we are proving that
\eqref{conv4-4} is valid for an arbitrary compact set $K$.\\
Let once again, $\varepsilon> 0$ be fixed. 
By (\ref{Th1-4}.iii) and (\ref{Th1-4}.iv)
 there exist
$\delta_1,\delta_2>0$   such that:
\begin{description}
\item[$j_1$)] if  $m_n(E)<\delta_2$, then $\displaystyle{\int_E \vert f_n\vert \,dm_n<\ve}$ for every $n \in \mathbb{N}$;
\item[$j_2$)] if  $m(E)<\delta_1$, then  $m_n(E)<\delta_2 $ for every $n \in \mathbb{N}$;
\item[$j_3$)]  if  $m(E)<\delta_1$, then $\displaystyle{\int_E \vert f\vert \,dm<\ve}$.
\end{description}
Let now 
 $U\supset{K}$  be an open set such that $m(U\smi{K})<\delta_1$.
 Then let $g:\vO\to[0,1]$  be continuous and such that $g=1$ on $K$ and zero on $U^c$.\\
Observe that the sequence $(f_n g)_n$ and  the function $fg$ satisfy all the hypotheses of Theorem \ref{Th1-4} so, for the Step 1,  we have
\[
	 \lim_n \int_{U} f_n g\, dm_n	=  \lim_n \int_{\Omega} f_n g\, dm_n =  \int_{\Omega} fg \,  dm =
\int_{U} fg \,  dm .
\]
Then, by the previous inequalities and  for  $n$ sufficiently large,  we have
\begin{align}
& \Big\vert  \int_K f_n\,dm_n-\int_K f\,dm \Big\vert  =
\Big\vert  \int_K f_n g \,dm_n - \int_K fg \,dm 
+ \big( \int_{K^c} f_n g \,dm_n- \int_{K^c} f g\,dm \big)   + 
\nonumber \\ 
&  - \big( \int_{K^c} f_n g\, dm_n - \int_{K^c} f g \,dm \big) 
\Big \vert  
\nonumber \\
&\leq \left\vert \int_{\vO}f_ng\,dm_n -\int_{\vO}fg\,dm\right\vert  + 
\left\vert \int_{K^c}f_ng\,dm_n-\int_{K^c}fg\,dm\right\vert  \nonumber \\
&= \left\vert \int_{\vO}f_ng\,dm_n-\int_{\vO}fg\,dm\right\vert 
+ \left\vert \int_{U\smi{K}}f_ng\,dm_n-\int_{U\smi{K}}fg\,dm\right\vert  \nonumber  \\
&< \left\vert \int_{\vO}f_ng\,dm_n-\int_{\vO}fg\,dm\right\vert  + \int_{U\smi{K}}\vert f_n\vert \,dm_n +
 \int_{U\smi{K}}\vert f\vert \,dm <3\ve. \nonumber
\end{align}
\item[Step 3]
Let now $B$ a Borelian set and let $\varepsilon> 0$,
$\delta_1,\delta_2>0$  as in Step 2.
Let $C_1$ be a compact set with $C_1 \subset B$
such that
$m(B \setminus C_1) <\delta_1$.
So
\begin{align}
&\left\vert \int_B f_n\,dm_n-\int_B f\,dm\right\vert 
\leq \left\vert \int_{C_1 }f_n \,dm_n -\int_{C_1} f \,dm\right\vert 
+ \nonumber \\
 & + \left\vert \int_{B \setminus C_1}f_n \,dm_n-\int_{B \setminus C_1}f \,dm\right\vert   \nonumber  \\
& \leq \left\vert \int_{C_1}f_n \,dm_n-\int_{C_1}f \,dm\right\vert  + \int_{B\smi{C_1}}\vert f_n\vert \,dm_n + \int_{B\smi{C_1}}\vert f\vert \,dm. \nonumber
\end{align}
So the assertion follows from $j_1)-j_3$) and the compact case in Step 2 and
 this ends the proof.
\end{description}
\end{proof}

\begin{corollary}
Let $m$ and $m_n$, $n \in\N$,	be
 measures in   $\mathcal{M}(\Omega)$,  with $m$    Radon.  	
		If $(m_n)_n$ is  vaguely   convergent to $m$ and
uniformly  absolutely continuous with respect to $m$, then  $(m_n)_n$ converges setwisely to $m$.
\end{corollary}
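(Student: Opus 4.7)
The plan is to apply Theorem \ref{Th1-4} to the constant sequence $f_n \equiv 1$ together with the constant limit function $f \equiv 1$. Since the integrals $\int_A 1 \, dm_n = m_n(A)$ and $\int_A 1\, dm = m(A)$, the conclusion \eqref{conv4-4} of Theorem \ref{Th1-4}, which delivers convergence of integrals on every Borel set, directly yields $m_n(A) \to m(A)$ for every $A \in \mathcal{B}$, which is precisely the setwise convergence asserted.

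The only thing to check is that all four hypotheses of Theorem \ref{Th1-4} are satisfied in this degenerate choice. Hypothesis (\ref{Th1-4}.i) is trivial since $f_n = f$ identically. Hypothesis (\ref{Th1-4}.ii) holds because constants belong to $C(\Omega)$. Hypothesis (\ref{Th1-4}.iv) is exactly the assumption of the corollary. For hypothesis (\ref{Th1-4}.iii), one observes that $\int_A |f_n|\, dm_n = m_n(A)$, so given $\varepsilon > 0$ the choice $\delta := \varepsilon$ in Definition \ref{a-c}.a makes the implication
\begin{equation*}
m_n(A) < \delta \,\,\Longrightarrow\,\, \int_A |f_n|\, dm_n < \varepsilon
\end{equation*}
tautological, uniformly in $n$; the same argument applies to $f \equiv 1$.

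There is no real obstacle here, and no step that requires an additional idea beyond invoking Theorem \ref{Th1-4}. Indeed, the result can be viewed simply as the ``scalar integrand equals $1$'' instance of the main theorem of the section, restating that setwise convergence is automatically recovered from vague convergence as soon as one has uniform absolute continuity with respect to the Radon limit measure $m$, in accordance with the comment already made in Remark \ref{RWV} about the comparison between setwise, weak, and vague convergence.
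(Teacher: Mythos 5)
Your proposal is correct and coincides with the paper's own proof, which likewise obtains the corollary by applying Theorem \ref{Th1-4} with $f_n=f\equiv 1$; your verification that hypothesis (\ref{Th1-4}.iii) becomes a tautology for the constant function is a welcome but routine detail the paper leaves implicit.
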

\begin{proof} It is a consequence of Theorem \ref{Th1-4} if we assume $f_n=f\equiv 1$ for every $n \in \mathbb{N}$.
\end{proof}

 \begin{remark}\label{positivepart}
		\rm
		\phantom{a}
		
\begin{description}
\item[\ref{positivepart}.a)] We observe  that under the hypotheses of  Theorem \ref{Th1-4}, if  $f \in C(\Omega)$,
	 then also  $f^{\pm}$  are in $C(\Omega)$ and $f_n^{\pm}(t)  \rightarrow f^{\pm}(t)$   $m$-a.e. as $n 			\rightarrow \infty$.
	In fact
	\begin{align}
	& \Big\vert  \, \vert f_n\vert  - \vert f\vert \, \Big\vert  \leq \big\vert  f_n - f \big\vert  \nonumber
	\\
	& 2 f_n^+ = f_n + \vert f_n \vert  \to f + \vert f \vert  = 2f^+; \nonumber \\
	& 2 f_n^- =  \vert f_n\vert  - f_n \to  \vert f\vert  - f = 2f^-. \nonumber
	\end{align}

	 Moreover also $(f_n^{\pm})_n$ and $f^{\pm}$  satisfy condition \ref{Th1-4}.iii) since
		\[
	 f_n^{\pm} \leq \vert f_n\vert  \quad \mbox{and} \quad
	f^{\pm} \leq \vert f\vert .
		\]
 	Therefore  in the hypotheses of Theorem \ref{Th1-4}
	we get also
	\[
	\lim_n \int_{A} f^{\pm}_ndm_n =  \int_{A} f^{\pm}dm, \quad
	\mbox{for every } A \in \mathcal{B}.
	\]	
\item[\ref{positivepart}.b)]
Theorem \ref{Th1-4} is still valid if we replace condition \ref{Th1-4}.i) with
	\begin{description}
		\item[ \ref{Th1-4}.i')] $f_n$ converges in $m$-measure to $f$.
	\end{description}

	In fact, by \ref{Th1-4}.i'),
	 there exists a subsequence of $(f_{n_k})_k$ which converges $m$-a.e. to $f$.
	Then  Theorem \ref{Th1-4} is true for such subsequence. So
	this implies that the result of this theorem, equality (\ref{conv4-4}),  is still  valid for the initial sequence
	(with convergence in $m$-measure) because if, absurdly, a subsequence
	 existed in which it is not valid, there would be a contradiction.
\end{description}
\end{remark}

A simple consequence of the  Theorem \ref{Th1-4} is the following
\begin{theorem}\label{Th1-ter}
Let  $m$ and $m_n$, $n \in\N$,	be measures in   $\mathcal{M}(\Omega)$,  with $m$  
Radon. Let $f, f_n  \in \mathcal{F}(\Omega)$.
	Suppose that
	\begin{description} 					
		\item[(\ref{Th1-ter}.i)]  $f_n(t)  \rightarrow f(t)$,  $m$-a.e.;
		\item[(\ref{Th1-ter}.ii)]  $f \in C_b(\Omega)$;
		\item[(\ref{Th1-ter}.iii)]  $(f_n)_n$ has uniformly absolutely continuous  $(m_n)$-integrals
			on $\varOmega$;
		\item[(\ref{Th1-ter}.iv)]  $(m_n)_n$ is vaguely  convergent to $m$ and uniformly
		absolutely continuous with respect to $m$.
		\end{description} 	
	Then  	
		\[
			\lim_n \int_{A} f_ndm_n =  \int_{A} fdm, \quad \mbox{for every } A \in \mathcal{B}.
		\]	
\end{theorem}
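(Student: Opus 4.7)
The plan is to deduce Theorem \ref{Th1-ter} directly from Theorem \ref{Th1-4}. Comparing the two hypothesis lists, I see that (\ref{Th1-ter}.i) coincides with (\ref{Th1-4}.i), (\ref{Th1-ter}.iv) coincides with (\ref{Th1-4}.iv), and since $C_b(\Omega)\subset C(\Omega)$ the assumption (\ref{Th1-ter}.ii) implies (\ref{Th1-4}.ii). The only nontrivial gap is that Theorem \ref{Th1-4}.iii asks for uniformly absolutely continuous $(m_n)$-integrals of both $(f_n)_n$ \emph{and} $f$, whereas (\ref{Th1-ter}.iii) only provides this for the sequence $(f_n)_n$. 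So my only real task is to verify that the limit function $f$ also has uniformly absolutely continuous $(m_n)$-integrals.

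This step is immediate from the boundedness of $f$. Setting $M:=\sup_{\omega\in\Omega}|f(\omega)|<+\infty$, for every Borel set $A$ and every $n\in\N$ one has
\[
\int_A |f|\,dm_n \;\leq\; M\cdot m_n(A).
\]
Given $\varepsilon>0$, choose $\delta:=\varepsilon/(M+1)$; then $m_n(A)<\delta$ forces $\int_A|f|\,dm_n<\varepsilon$, which is exactly the condition in \eqref{Pettis1} (applied with $f_n:=f$ for every $n$). Hence $f$ has uniformly absolutely continuous $(m_n)$-integrals on $\Omega$.

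With this verification in hand, all four hypotheses of Theorem \ref{Th1-4} are satisfied, and I would conclude by quoting that theorem to obtain both $f\in L^1(m)$ and $\lim_n\int_A f_n\,dm_n=\int_A f\,dm$ for every $A\in\mathcal{B}$. There is no genuine obstacle here; the statement is essentially a corollary of Theorem \ref{Th1-4} whose point is to record that, when $f$ happens to be bounded, the absolute continuity condition on $f$ in (\ref{Th1-4}.iii) becomes automatic and hence can be dropped from the list of assumptions.
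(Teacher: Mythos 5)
Your proposal is correct and follows exactly the paper's own argument: the only missing hypothesis of Theorem \ref{Th1-4} is the uniform absolute continuity of the $(m_n)$-integrals of $f$, and this is automatic from the bound $\int_A|f|\,dm_n\leq M\,m_n(A)$ with $M=\sup_\Omega|f|$, which is precisely the paper's one-line justification. Nothing to add.
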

\begin{proof}
The assertion follows from Theorem \ref{Th1-4}  since  $f$
has uniformly absolutely continuous  $(m_n)$-integrals, in fact it is enough to take the pair
$(\varepsilon, \delta(\varepsilon/M))$ where $M > \sup_{t \in \Omega} \vert f(t)\vert $.
\end{proof}

\section{The multivalued  and the vector cases}\label{sec4}

\subsection{The multivalued case}

Let $X$ be a Banach space with   dual $X^*$ and let $B_{X^*}$ be the unit ball of $X^*$.
The symbol  $cwk(X)$ denotes
the family of all weakly compact  and convex subsets of $X$.  For every $C \in cwk(X)$ the
{\it  support function of}   $\, C$ is denoted by $s( \cdot, C)$ and
defined on $X^*$ by $s(x^*, C) = \sup \{ \langle x^*,x \rangle \colon  \  x \in C\}$.
Recall that $X$ is said to be {\it weakly compact generated }
 (briefly WCG) if it possesses a weakly compact subset
 $K$ whose linear span is dense in $X$. \\
 A map $\Gamma: \Omega \to cwk(X)$ is called a {\it multifunction}.
A space $Y \subset X$ {\it $m$-determines}
a multifunction $\Gamma$
 if $s(x^*, \Gamma)=0$ $m$ a.e. for every $x^* \in Y^{\perp}$, where the exceptional sets depend on $x^*$. \\
A multifunction $\Gamma$ is said to be
\begin{itemize}
	\item {\it scalarly measurable} if
	$t\to s(x^*,\Gamma(t))$ is measurable, for every $x^*\in X^*$;
\item  {\it scalarly $m$-integrable} if
	$t\to s(x^*,\Gamma(t))$ is $m$-integrable, for every $x^*\in X^*$, where $m \in \mathcal{M}(\Omega)$;
\item  {\it scalarly continuous} if for every $x^*\in X^*$, $t\to s(x^*,\Gamma(t))$ is continuous.
\end{itemize}
A multifunction  $\Gamma: \Omega \to cwk(X)$ is said to be  {\it Pettis integrable }   in $cwk(X)$
  with respect to  a measure  $m$ (or shortly Pettis $m$-integrable) if
 $\Gamma$ is scalarly $m$-integrable  and for every measurable set
 $A$, there exists $M_{\Gamma}(A) \in cwk(X)$ such that
\[
s(x^*,M_{\Gamma}(A))=\int_A s\big(x^*,\Gamma \big)\, dm \qquad \mbox{\rm for  all }   \;     x^*\in X^*.
\]

We set $\dint_A \Gamma dm := M_{\Gamma}(A)$.\\ For the properties of Pettis $m$-integrability in the
 multivalued case  we refer to \cite{can1,can2,can3,xcoletti,M2,mu8}, while for the vector case we refer to 
 \cite{M}.
If $\Gamma$ is single-valued we obtain the classical definition of Pettis integral for vector function.
\\

 Given a sequence of multifunctions
we introduce now some definitions of uniformly absolutely   continuous scalar integrability using Definition \ref{a-c}.\\

\begin{definition}\label{def3.1}
\rm
For every $n \in\N$, let $m_n$ be a measure in $\mathcal{M}(\Omega)$
and  let $\Gamma_n :\vO \rightarrow cwk(X)$ be a  multifunction which is  scalarly $m_n$-integrable.
We say that the sequence $(\Gamma_n)_n$
 has {\it uniformly absolutely   continuous scalar $(m_n)$-integrals   on
$\vO$}  if for every $ \varepsilon  >0$
there exists $\delta >0$ such that, for every $n\in\N$ and $A \in  \mathcal{B}$, it is
\begin{equation}
 m_n(A)<\delta  \ \Rightarrow \
	\sup\left\{\int_A\vert s(x^*, \Gamma_n)\vert dm_n \colon \parallel x^*\parallel  \leq 1\right\}< \varepsilon.\, \label{formula-def3.1}
\end{equation}
Analogously a multifunction   $\Gamma$
has {\it  uniformly absolutely   continuous scalar $(m_n)$-integrals   on $\varOmega$}
 if previous condition (\ref{formula-def3.1}) holds for  $\Gamma_n:=\Gamma$ for every $n \in \mathbb{N}$.	
Moreover we say that $\Gamma$ has
{\it  uniformly absolutely   continuous scalar $m$-integrals   on $\varOmega$}
 if, in formula (\ref{formula-def3.1}), it is $\Gamma_n:=\Gamma$ and
 $m_n=m$ for every $n \in \mathbb{N}$. In this case we have, for every $A \in  \mathcal{B}$,
\begin{equation}
\hskip-.4cm	
 m(A)<\delta  \ \Rightarrow \
	\sup\left\{\int_A\vert s(x^*, \Gamma)\vert dm \colon \parallel x^*\parallel \leq 1\right\}< \varepsilon. \label{formula-def4.1}
\end{equation}
\end{definition}

\begin{theorem}\label{Thmulti2} Let  $\Gamma, \Gamma_n$, $n \in\N$,  be scalarly measurable multifunctions.
 Moreover let $m, \  m_n$, $n \in\N$,  be  measures in  ${\mathcal M}(\Omega)$ and let $m$ be  Radon.
		Suppose that	
\begin{description}	
\item[(\ref{Thmulti2}.j)]  $ (\Gamma_n )_n$ and $\Gamma$
		 have   uniformly absolutely continuous  scalar $(m_n)$-integrals on $\varOmega$;
\item[(\ref{Thmulti2}.jj))]  $s(x^*, \Gamma_n)  \rightarrow s(x^*, \Gamma)$  $m$-a.e. 
	for each $x^* \in X^*$;
 \item[(\ref{Thmulti2}.jjj)]  $\Gamma$ is scalar continuous; 			
\item[(\ref{Thmulti2}.jv)]   $(m_n)_n$ is vaguely   convergent to $m$ and  uniformly absolutely continuous with respect to $m$;	
\item[(\ref{Thmulti2}.v)]	each  multifunction $\Gamma_n$ is Pettis $m_n$-integrable.	
\end{description}
	Then the multifunction $\Gamma$ is Pettis $m$-integrable  in $cwk(X)$ and
\begin{equation}
	 \lim_n s\Big( x^*, \int_{A} \Gamma_n\, dm_n \Big)=  s\Big( x^*, \int_{A} \Gamma\, dm \Big), \label{fuori}
\end{equation}
	for every $x^* \in X^*$ and for every $A \in \mathcal{B}$.
\end{theorem}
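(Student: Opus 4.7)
The plan is to reduce the multivalued assertion to the scalar Theorem \ref{Th1-4} by scalarising through support functions. For each fixed $x^*\in X^*$, set $f_n := s(x^*,\Gamma_n)$ and $f := s(x^*,\Gamma)$. I would verify the four hypotheses of Theorem \ref{Th1-4}: condition (\ref{Th1-4}.i) is precisely (\ref{Thmulti2}.jj); (\ref{Th1-4}.ii) is (\ref{Thmulti2}.jjj); (\ref{Th1-4}.iv) is (\ref{Thmulti2}.jv); and (\ref{Th1-4}.iii) follows from (\ref{Thmulti2}.j) by a rescaling, since for $x^*\neq 0$ one has $|s(x^*,\Gamma_n)| = \|x^*\|\cdot |s(x^*/\|x^*\|,\Gamma_n)|$, so that the uniform absolute continuity at level $\varepsilon/\|x^*\|$ over the unit ball transfers to the scalarised sequence $(f_n)_n$ and to $f$.

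Theorem \ref{Th1-4} then yields $s(x^*,\Gamma)\in L^1(m)$ and, for every $A\in\mathcal{B}$ and every $x^*\in X^*$,
\[
\lim_n \int_A s(x^*,\Gamma_n)\,dm_n \;=\; \int_A s(x^*,\Gamma)\,dm.
\]
Because each $\Gamma_n$ is Pettis $m_n$-integrable by (\ref{Thmulti2}.v), the left-hand side equals $s(x^*, M_{\Gamma_n}(A))$. Therefore, as soon as I can exhibit, for each $A\in\mathcal{B}$, a set $M_\Gamma(A)\in cwk(X)$ satisfying $s(x^*,M_\Gamma(A))=\int_A s(x^*,\Gamma)\,dm$ for all $x^*\in X^*$, both the Pettis $m$-integrability of $\Gamma$ and the identity (\ref{fuori}) follow immediately.

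The main obstacle is thus to check that, for each $A\in\mathcal{B}$, the sublinear positively-homogeneous functional $\Phi_A(x^*) := \int_A s(x^*,\Gamma)\,dm$ is the support function of an element of $cwk(X)$. H\"ormander's theorem recognises $\Phi_A$ as the support function of some weak$^*$-closed convex set in $X^{**}$; the delicate point is to force this set to sit inside $X$ and to be weakly compact. Here I would exploit that $\Phi_A$ is the pointwise limit on $X^*$ of the support functions $s(\cdot,M_{\Gamma_n}(A))$ of members of $cwk(X)$, that $\sup_{\|x^*\|\le 1}\Phi_A(x^*)$ is finite by hypothesis (\ref{Thmulti2}.j), and that the scalar continuity of $\Gamma$ in (\ref{Thmulti2}.jjj) gives a well-behaved candidate multifunction. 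A standard Pettis-integrability criterion for $cwk(X)$-valued multifunctions in the spirit of \cite{M2,mu8} should then close the argument and produce $M_\Gamma(A)\in cwk(X)$, after which (\ref{fuori}) is nothing but the scalar convergence recorded above.
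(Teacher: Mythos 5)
Your scalarisation step is exactly the paper's: fix $x^*$, apply Theorem \ref{Th1-4} to $s(x^*,\Gamma_n)$ and $s(x^*,\Gamma)$ (the rescaling remark for passing from the unit-ball formulation of (\ref{Thmulti2}.j) to condition (\ref{Th1-4}.iii) is correct), and obtain $\lim_n\int_A s(x^*,\Gamma_n)\,dm_n=\int_A s(x^*,\Gamma)\,dm$ for every Borel $A$. Once Pettis $m$-integrability of $\Gamma$ is known, \eqref{fuori} is indeed just this scalar limit. So far, so good.

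The gap is that the part you yourself identify as ``the main obstacle'' --- producing $M_\Gamma(A)\in cwk(X)$ --- is precisely where the real work lies, and you defer it to ``a standard Pettis-integrability criterion \ldots should then close the argument'' without supplying the two inputs that criterion actually requires. The paper invokes \cite[Theorem 2.5]{M2}, which for an \emph{arbitrary} Banach space demands: (a) that the sublinear operator $T_\Gamma:X^*\to L^1(m)$, $T_\Gamma(x^*)=s(x^*,\Gamma)$, be weakly compact, and (b) that $\Gamma$ be $m$-determined by a WCG subspace of $X$. For (a), boundedness of $T_\Gamma$ is not enough; one needs uniform integrability of $\{s(x^*,\Gamma):\|x^*\|\le 1\}$ in $L^1(m)$, which the paper obtains by \emph{transferring} the uniform absolute continuity from the $(m_n)$-integrals to the $m$-integral: for $m(E)<\delta$ one splits $E$ into $E^{\pm}=\{t\in E: s(x^*,\Gamma(t))\gtrless 0\}$ and uses the scalar limit relation on $E^{\pm}$ together with $\sup_n m_n(E)<\sigma$ to bound $\int_E|s(x^*,\Gamma)|\,dm$ by $2\varepsilon$ uniformly in $x^*\in B_{X^*}$. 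Your hypothesis (\ref{Thmulti2}.j) controls the $m_n$-integrals, not the $m$-integral, so this transfer is a genuine step, not a restatement. For (b), the paper takes, for each $n$, a set $W_n$ generating a WCG space that $m_n$-determines $\Gamma_n$, forms $W=\sum 2^{-n}W_n$, and checks that the WCG space $Y$ it generates $m$-determines $\Gamma$: if $y^*\in Y^{\perp}$ then $s(y^*,\Gamma_n)=0$ $m_n$-a.e.\ for all $n$, and applying the scalar limit relation on $\Omega^{\pm}=\{t:s(y^*,\Gamma(t))\gtrless 0\}$ gives $s(y^*,\Gamma)=0$ $m$-a.e. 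Without (b), your observation that $\Phi_A$ is a pointwise limit of support functions of members of $cwk(X)$ does not prevent the H\"ormander/Dunford set from living in $X^{**}\setminus X$ or from failing weak compactness; pointwise limits of support functions of weakly compact convex sets need not be support functions of such sets. You should carry out both (a) and (b) explicitly to complete the proof.
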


\begin{proof}
Let $x^* \in X^*$ be fixed.
 Then the sequence of functions $\big(s(x^*, \Gamma_n)\big)_n$ and the function $s(x^*, \Gamma)$
 defined on $\Omega$ satisfy  the assumptions  of  Theorem \ref{Th1-4}.
So,  for each $A \in \mathcal{B}$
\begin{equation}
	\lim_n \int_{A} s(x^*, \Gamma_n)\, dm_n =  \int_{A}s (x^*, \Gamma)\, dm. \label{eccola!}
\end{equation}
In order to  prove that $\Gamma$ is Pettis $m$-integrable,
following \cite[Theorem 2.5]{M2}, it is enough to show that
 the sublinear operator
$T_{\Gamma}: X^* \to L^1(m)$, defined as $T_{\Gamma}(x^*)=s(x^*, \Gamma)$
is weakly compact (step  $C_w$) and that $\Gamma$ is determined by a $WCG$ space $Y \subset X$ (step $D$).
\begin{description}
\item[$C_w$)] First of all we prove that the operator $T_{\Gamma}$ is bounded.
By (\ref{Thmulti2}.jjj) $\Gamma$ is scalar $m$-integrable.
Therefore
$\Gamma$ is Dunford-integrable
in $cw^*k(X^{**})$,  where  $X^{**}$ is endowed with the $w^*$-topology, and
for every $A \in \mathcal{B}$ let $M^D_{\Gamma} (A) \in cw^*k(X^{**})$ be such that
\begin{equation} \label{D1}
	s(x^*, M^D_{\Gamma} (A)) =\int_As(x^*,\Gamma) dm < +\infty,
\end{equation}
for every $x^* \in X^*$.
So $s(x^*,M^D_{\Gamma} ( \cdot))$ is a scalar measure and

\[\int_{\Omega} \vert s(x^*,\Gamma) \vert dm \leq
2 \sup_{A\in \mathcal{B} }\left\vert  \int_A s(x^*,\Gamma) dm \right\vert 
 < +\infty.\]
Hence, the set 
\mbox{$\displaystyle{\bigcup_{A \in \mathcal{B}} }M_{\Gamma}^D(A) \subset X^{**}$}
  is bounded, by  the Banach– Steinhaus Theorem, and
\[
\sup_{\parallel x^*\parallel  \leq 1} \int_{\Omega} \vert s(x^*,\Gamma) \vert dm \leq 2 \sup \left\{ \parallel  x \parallel : x \in
\bigcup_{A \in \mathcal{B}} M_{\Gamma}^D(A) \right\} <+ \infty.\]
Since  the set $\big\{s(x^*,\Gamma) : \parallel x^*\parallel \leq1 \big\}$ is bounded in $L^1(m)$,
 the operator $T_{\Gamma}$ is bounded.

In order to obtain the weak compactness of the operator $T_{\Gamma}$ it is enough to prove that
$\Gamma$ has absolutely   continuous scalar $m$-integrals   on $\varOmega$.
Let  $x^* \in B_{X^*}$ be fixed. Now fix $\varepsilon >0$ and let $\sigma(\varepsilon) >0$
 satisfy  (\ref{Thmulti2}.j).
Moreover  let $\delta(\sigma) >0$  verify 
 { (\ref{Thmulti2}.jv)}.\\
Let $E \in  \mathcal{B}$ be such that $m(E) < \delta$ and  set
\[
 E^+=\{ t \in E : s(x^*, \Gamma(t)) \geq 0\} \quad
 E^-=\{ t \in E : s(x^*, \Gamma(t)) <0\}.
\]
By \eqref{eccola!} let now  $ N_{x^*} \in \mathbb{N}$ be an
 integer  such that for every $n \geq N_{x^*}$
\[ \left\vert \int_{E^{\pm}} s(x^*, \Gamma)dm \right\vert  < 
\left\vert  \int_{E^{\pm}} s(x^*, \Gamma_{n})dm_{n} \right\vert  + \frac{\varepsilon}{2}.\]
So, for every $n \geq N_{x^*}$,
\begin{align}
\int_E \vert s(x^*, \Gamma)\vert  dm &= \int_{E^+} s(x^*, \Gamma)dm + \left\vert \int_{E^-} s(x^*, \Gamma)dm \right\vert  \nonumber\\
&< \left\vert  \int_{E^+} s(x^*, \Gamma_{n})dm_{n} \right\vert  +
 \left\vert  \int_{E^-} s(x^*, \Gamma_{n})dm_{n} \right\vert  +\varepsilon. \nonumber
\end{align}
Since, by  (\ref{Thmulti2}.jv), it is in particular  $m_{n}(E) < \sigma$ for every $n \geq N_{x^*}$,   we get
\[
\int_E \vert s(x^*, \Gamma)\vert  dm
\leq  \int_E \vert s(x^*, \Gamma_{n})\vert  dm_{n} + \varepsilon < 2 \varepsilon
\]
so $\Gamma$ has   uniformly absolutely   continuous scalar $m$-integral   on $\varOmega$.\\
\item[$D$)]
We have to show the existence
of a  $WCG$ subspace of $X$ which determines
 $\Gamma$.
Since  $\Gamma_n$ is Pettis $m_n$-integrable, for every $n \in\N$,
let $Y_n \subseteq X$ be a $WCG$ space  generated by a
 set $W_n \in cwk( B_{X^*})$
which  $m_n$-determines $\Gamma_n$,
by \cite[Theorem 2.5]{M2}.\\
We may suppose, withous loss of generality that each $W_n$ is absolutely convex, by
 Krein-Smulian's Theorem.
Let $Y$ be the WCG space generated by  $W: =\sum 2^{-n} W_n$.
We want to prove that  $\Gamma$ is $m$-determined by $Y$.\\
If $y^*\in{Y^\perp}$, then $y^*\in{Y_n^\perp}$ for each $n$, hence $s(y^*,\Gamma_n)=0$ $m_n$-a.e.
Applying (\ref{eccola!}) with $A=\Omega^+:=\{t  : s(y^*, \Gamma(t)) \geq 0\}$
($A = \Omega^-:=\{t  : s(y^*, \Gamma(t)) < 0\}$)
we get
\[ \int_{\Omega^{\pm}}s (y^*, \Gamma)\, dm=\lim_n \int_{\Omega^{\pm}} s(y^*, \Gamma_n)\, dm_n  =0.\]
Therefore $s(y^*,\Gamma(t))=0$ $m$-a.e. on the set $\Omega$.
Thus, $Y$ $m$-determines the multifunction $\Gamma$ and the Pettis $m$-integrability of $\Gamma$  follows.
Moreover \eqref{fuori} follows from \eqref{eccola!}.
\end{description}
\end{proof}
As an immediate consequence of the previous theorem we have a result for the vector case:
\begin{corollary}\label{Thmulti2c}
Let  $g, g_n :\Omega \rightarrow  X$, $n \in\N$,  be scalarly measurable functions.
 Moreover let $m, \  m_n$, $n \in\N$,  be  measures in  ${\mathcal M}(\Omega)$ and let $m$ be  Radon.
		Suppose that	
\begin{description}	
\item[(\ref{Thmulti2c}.j)]  $ (g_n )_n$ and $g$
		 have   scalarly uniformly absolutely continuous $(m_n)$-integrals on $\varOmega$;
\item[(\ref{Thmulti2c}.jj))]  $g_n  \rightarrow g$ scalarly  $m$-a.e.
	 where the null set depends on $x^* \in X^*$;
 \item[(\ref{Thmulti2c}.jjj)]  $g$ is scalar continuous; 			
\item[(\ref{Thmulti2c}.jv)]   $(m_n)_n$ is vaguely   convergent to $m$ and uniformly
		absolutely continuous with respect to $m$;	
\item[(\ref{Thmulti2c}.v)]	each   $g_n$ is Pettis $m_n$-integrable.	
\end{description}
	Then $g$ is Pettis $m$-integrable  in $X$ and	
\[
 \lim_n x^*\left(\int_{A}  g_n\, dm_n \right)=
 x^* \left(\int_{A}  g\, dm \right),
\]
	for every $x^* \in X^*$ and $A \in \mathcal{B}$.
\end{corollary}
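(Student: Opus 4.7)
The plan is to reduce the vector-valued statement to the multivalued Theorem \ref{Thmulti2} by viewing each function as a singleton-valued multifunction. I would set $\Gamma_n(t):=\{g_n(t)\}$ and $\Gamma(t):=\{g(t)\}$, both of which take values in $cwk(X)$ since singletons are weakly compact and convex. Under this identification the support functions satisfy
\[
s(x^*,\Gamma_n(t))=x^*(g_n(t)), \qquad s(x^*,\Gamma(t))=x^*(g(t)),
\]
which makes the scalar translation of every hypothesis completely mechanical.

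Next, I would verify one-by-one that \ref{Thmulti2c}.j--\ref{Thmulti2c}.v for $(g_n)_n,g$ are exactly \ref{Thmulti2}.j--\ref{Thmulti2}.v for $(\Gamma_n)_n,\Gamma$. The uniform absolute continuity of scalar integrals coincides because $|s(x^*,\Gamma_n)|=|x^*(g_n)|$; the a.e.\ scalar convergence and scalar continuity translate directly; condition \ref{Thmulti2c}.jv is unchanged; and \ref{Thmulti2c}.v, i.e.\ Pettis $m_n$-integrability of $g_n$ as a vector function, is equivalent to Pettis $m_n$-integrability of $\Gamma_n$ in $cwk(X)$, with $M_{\Gamma_n}(A)=\{\int_A g_n\,dm_n\}$.

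With all hypotheses verified, Theorem \ref{Thmulti2} yields Pettis $m$-integrability of $\Gamma$ in $cwk(X)$ together with
\[
\lim_n s\!\Big(x^*,\int_A\Gamma_n\,dm_n\Big)=s\!\Big(x^*,\int_A\Gamma\,dm\Big),\qquad x^*\in X^*,\ A\in\mathcal{B}.
\]
The only point that requires a short argument is that the Pettis integral $M_{\Gamma}(A)\in cwk(X)$ is actually a singleton, so that it corresponds to a genuine vector Pettis integral of $g$. This is the step I expect to be the main (if minor) obstacle: applying the defining identity to both $x^*$ and $-x^*$ gives
\[
\sup_{y\in M_\Gamma(A)} x^*(y)=\int_A x^*(g)\,dm=-\sup_{y\in M_\Gamma(A)}(-x^*)(y)=\inf_{y\in M_\Gamma(A)} x^*(y),
\]
so every $x^*\in X^*$ is constant on $M_\Gamma(A)$; by Hahn--Banach $M_\Gamma(A)$ reduces to a single point, which we call $\int_A g\,dm$. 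This gives the Pettis $m$-integrability of $g$, and the limit formula of the corollary follows at once from the displayed support-function convergence evaluated on the singletons.
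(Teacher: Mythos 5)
Your proposal is correct and follows exactly the route the paper intends: the paper states Corollary \ref{Thmulti2c} as an ``immediate consequence'' of Theorem \ref{Thmulti2} without further detail, and your specialization to the singleton-valued multifunctions $\Gamma_n=\{g_n\}$, $\Gamma=\{g\}$ with $s(x^*,\{y\})=x^*(y)$ is precisely that reduction. Your extra step showing via $x^*$ and $-x^*$ together with Hahn--Banach that $M_\Gamma(A)$ is a singleton is a correct and welcome filling-in of the detail the paper leaves implicit.
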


We conclude with the following result that  holds in a general measure space without any topology on the space $\Omega$.

 \begin{proposition}\label{Thmulti2bis} Let $\Omega $ be a measurable space on a $\sigma$-algebra $\mathcal{A}$
  and let $\Gamma, \Gamma_n$, $n \in\N$,  be scalarly measurable multifunctions.
 Moreover let $m, \  m_n$, $n \in\N$,  be  measures in  ${\mathcal M}(\Omega)$.
		Suppose that	
	\begin{description}	
	\item[(\ref{Thmulti2bis}.j)]  $ (\Gamma_n )_n$
		 have   scalarly uniformly absolutely continuous $(m_n)$-integrals on $\varOmega$;
	 \item[(\ref{Thmulti2bis}.jj)]  $\Gamma$ is  scalarly $m$-integrable; 			
	\item[(\ref{Thmulti2bis}.jjj)]  $(m_n)_n$ is uniformly
		absolutely continuous with respect to $m$;
	\item[(\ref{Thmulti2bis}.jv)] each  multifunction $\Gamma_n$	is Pettis $m_n$-integrable.
\item[(\ref{Thmulti2bis}.v)]   for every $A \in \mathcal{A}$ and for every $x^* \in X^*$ it is
\[
 \lim_n  \int_{A}  s(x^*, \Gamma_n) \, dm_n =  \int_A s(x^*,   \Gamma) \, dm.
\]

	\end{description}
	Then the multifunction $\Gamma$ is Pettis $m$-integrable.
	\end{proposition}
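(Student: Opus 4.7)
The plan is to mirror the two-step strategy of Theorem \ref{Thmulti2} while bypassing the topological machinery: the convergence of scalar integrals, which in Theorem \ref{Thmulti2} is obtained from Theorem \ref{Th1-4}, is here granted outright by hypothesis (\ref{Thmulti2bis}.v). By \cite[Theorem 2.5]{M2}, Pettis $m$-integrability of $\Gamma$ in $cwk(X)$ is equivalent to showing that the sublinear operator $T_{\Gamma}:X^{*}\to L^{1}(m)$, $T_{\Gamma}(x^{*})=s(x^{*},\Gamma)$, is weakly compact, together with the existence of a WCG subspace $Y\subset X$ that $m$-determines $\Gamma$. These are the two blocks I would establish.

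For the weak compactness block I would first get boundedness of $T_{\Gamma}$ exactly as in step $C_{w}$ of Theorem \ref{Thmulti2}: assumption (\ref{Thmulti2bis}.jj) makes $\Gamma$ Dunford integrable in $cw^{*}k(X^{**})$, the Banach--Steinhaus Theorem applied to $\bigcup_{A}M_{\Gamma}^{D}(A)\subset X^{**}$ yields
\[
\sup_{\|x^{*}\|\le 1}\int_{\Omega}|s(x^{*},\Gamma)|\,dm<+\infty.
\]
Then I would transfer the uniform scalar absolute continuity from $(\Gamma_{n})_{n}$ to $\Gamma$. Fix $\ve>0$, choose $\sigma>0$ from (\ref{Thmulti2bis}.j) and $\delta>0$ from (\ref{Thmulti2bis}.jjj) so that $m(E)<\delta$ forces $\sup_{n}m_{n}(E)<\sigma$. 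For $x^{*}\in B_{X^{*}}$ split $E=E^{+}\cup E^{-}$ according to the sign of $s(x^{*},\Gamma)$; applying (\ref{Thmulti2bis}.v) on $E^{+}$ and $E^{-}$ separately gives, for $n$ large enough,
\[
\int_{E}|s(x^{*},\Gamma)|\,dm\le \Bigl|\int_{E^{+}}s(x^{*},\Gamma_{n})\,dm_{n}\Bigr|+\Bigl|\int_{E^{-}}s(x^{*},\Gamma_{n})\,dm_{n}\Bigr|+\ve\le 3\ve,
\]
using (\ref{Thmulti2bis}.j) since $m_{n}(E)<\sigma$. Thus $\Gamma$ has uniformly absolutely continuous scalar $m$-integrals on $\varOmega$, and \cite[Theorem 2.5]{M2} gives the weak compactness of $T_{\Gamma}$.

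For the determining-subspace block I would reproduce verbatim the construction of step $D$ in Theorem \ref{Thmulti2}: for each $n$ pick a WCG space $Y_{n}$ generated by an absolutely convex $W_{n}\in cwk(B_{X^{*}})$ that $m_{n}$-determines $\Gamma_{n}$ (existing by \cite[Theorem 2.5]{M2} and Krein--Smulian), set $W:=\sum 2^{-n}W_{n}$ and let $Y$ be the WCG space generated by $W$. If $y^{*}\in Y^{\perp}$, then $y^{*}\in Y_{n}^{\perp}$ for every $n$, so $s(y^{*},\Gamma_{n})=0$ $m_{n}$-a.e. Applying (\ref{Thmulti2bis}.v) with $A=\{t:s(y^{*},\Gamma(t))\ge 0\}$ and $A=\{t:s(y^{*},\Gamma(t))<0\}$ yields $\int_{A}s(y^{*},\Gamma)\,dm=0$ on both, hence $s(y^{*},\Gamma)=0$ $m$-a.e. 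Thus $Y$ $m$-determines $\Gamma$, and combined with weak compactness this gives Pettis $m$-integrability.

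The main obstacle I expect is the passage to absolute continuity of $\Gamma$'s scalar integrals: hypothesis (\ref{Thmulti2bis}.v) is a pointwise-in-$x^{*}$ statement, so the estimate must be carried out separately for each $x^{*}$ and on the sign-decomposition $E^{\pm}$ in order to recover the absolute value inside the integral. Once this is handled cleanly, everything else is a direct translation of the proof of Theorem \ref{Thmulti2} to the measurable setting, with (\ref{Thmulti2bis}.v) playing the role previously played by Theorem \ref{Th1-4}.
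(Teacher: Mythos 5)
Your proposal is correct and follows essentially the same route as the paper, whose own proof of this proposition simply refers back to steps $C_w$ and $D$ of Theorem \ref{Thmulti2} with hypothesis (\ref{Thmulti2bis}.v) replacing the convergence supplied there by Theorem \ref{Th1-4}; you have merely written out in full what the paper leaves as a reference. The sign-decomposition estimate and the WCG determining-subspace construction are exactly the arguments the paper intends.
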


\begin{proof}
The weak compactness  of the sublinear operator
$T_{\Gamma}: X^* \to L^1(m)$, defined as $T_{\Gamma}(x^*)=s(x^*, \Gamma)$ can be proved as
Theorem \ref{Thmulti2}, taking into account    hypotheses (\ref{Thmulti2bis}.jj), (\ref{Thmulti2bis}.jjj) and (\ref{Thmulti2bis}.v).\\
Moreover,
the proof that  $\Gamma$ is determined by a $WCG$ space $Y \subset X$ follows as in Theorem  \ref{Thmulti2}, taking into account
 hypotheses (\ref{Thmulti2bis}.j), (\ref{Thmulti2bis}.jjj), (\ref{Thmulti2bis}.jv) and (\ref{Thmulti2bis}.v).
Therefore  $\Gamma$ is Pettis $m$-integrable.\\
\end{proof}

At this point it is worth to observe that a similar result   has been  proved in \cite[Theorem 3.2]{dmms}
 under the hypothesis of the setwise convergence of the measures. Here instead of the setwise convergence we assume
the uniform absolute continuity of $(m_n)_n$ with respect to $m$.\\

For the vector case, as before, we have:
 \begin{corollary}\label{cormulti2bis}  Let $\Omega $ be a measurable space on a $\sigma$-algebra $\mathcal{A}$
  and let $g, g_n :\varOmega \to X$, $n \in\N$,  be scalarly measurable functions.
 Moreover let $m, \  m_n$, $n \in\N$,  be  measures in  ${\mathcal M}(\Omega)$.
		Suppose that	
\begin{description}	
\item[(\ref{cormulti2bis}.j)]  $ (g_n )_n$
		 have   scalarly uniformly absolutely continuous $(m_n)$-integrals on $\varOmega$;
\item[(\ref{cormulti2bis}.jj)]  $g$ is  scalar $m$-integrable; 			
\item[(\ref{cormulti2bis}.jjj)]  $(m_n)_n$ is uniformly
		absolutely continuous with respect to $m$;
\item[(\ref{cormulti2bis}.jv)]	each function $g_n$	is Pettis $m_n$-integrable;
\item[(\ref{cormulti2bis}.v)]   for every $A \in \mathcal{A}$ and for every $x^* \in X^*$ it is
\[
 \lim_n  \int_{A}  x^*g_n \, dm_n =  \int_A x^*g \, dm.
\]
\end{description}
	Then the function $g$ is Pettis $m$-integrable.
	\end{corollary}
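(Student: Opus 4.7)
The plan is to reduce Corollary \ref{cormulti2bis} to Proposition \ref{Thmulti2bis} by lifting the vector-valued functions to singleton-valued multifunctions. Concretely, I would set $\Gamma_n(t) := \{g_n(t)\}$ and $\Gamma(t) := \{g(t)\}$ for $t \in \Omega$ and $n \in \N$. Since singletons are weakly compact and convex, $\Gamma_n,\Gamma : \Omega \to cwk(X)$ are well-defined, and for every $x^* \in X^*$ one has $s(x^*,\Gamma_n(t)) = x^*g_n(t)$ and $s(x^*,\Gamma(t)) = x^*g(t)$. Under this identification the scalar measurability of $\Gamma_n, \Gamma$ follows from that of $g_n, g$.

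Next I would translate each hypothesis of Corollary \ref{cormulti2bis} into the corresponding hypothesis of Proposition \ref{Thmulti2bis}. Hypotheses (\ref{cormulti2bis}.j), (\ref{cormulti2bis}.jj), (\ref{cormulti2bis}.jjj) and (\ref{cormulti2bis}.v) become (\ref{Thmulti2bis}.j), (\ref{Thmulti2bis}.jj), (\ref{Thmulti2bis}.jjj) and (\ref{Thmulti2bis}.v) verbatim after the substitution $s(x^*,\Gamma_n) = x^* g_n$, $s(x^*,\Gamma) = x^*g$. For (\ref{cormulti2bis}.jv) I would invoke the paper's own remark that, when the values are singletons, Pettis integrability of the multifunction coincides with Pettis integrability of the vector function it represents, so each $\Gamma_n$ is Pettis $m_n$-integrable in $cwk(X)$. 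Applying Proposition \ref{Thmulti2bis} then yields the Pettis $m$-integrability of $\Gamma$ in $cwk(X)$: for every $A \in \mathcal A$ there exists $M_\Gamma(A) \in cwk(X)$ with $s(x^*,M_\Gamma(A)) = \int_A x^*g\,dm$ for every $x^* \in X^*$.

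The main obstacle, and the only step that is not pure translation, is passing from multivalued integrability back to vector integrability: I must show that each $M_\Gamma(A)$ is a singleton. For this I would use that for any $K \in cwk(X)$ the quantity $s(x^*,K) + s(-x^*,K) = \sup_{x \in K}\langle x^*,x\rangle - \inf_{x \in K}\langle x^*,x\rangle$ is nonnegative and vanishes for every $x^* \in X^*$ exactly when $K$ reduces to a single point. In our case
\[
s(x^*,M_\Gamma(A)) + s(-x^*,M_\Gamma(A)) = \int_A x^*g\,dm + \int_A(-x^*)g\,dm = 0
\]
for every $x^* \in X^*$, so $M_\Gamma(A) = \{z_A\}$ for some $z_A \in X$. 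The identity $x^*z_A = \int_A x^* g\,dm$ for all $x^* \in X^*$ is exactly the statement that $g$ is Pettis $m$-integrable with $\int_A g\,dm = z_A$, which completes the proof.
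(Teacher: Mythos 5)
Your proof is correct and follows essentially the same route as the paper, which derives Corollary \ref{cormulti2bis} from Proposition \ref{Thmulti2bis} by viewing $g$ and $g_n$ as the singleton-valued multifunctions $\{g\}$ and $\{g_n\}$. You additionally spell out the one detail the paper leaves implicit --- that $s(x^*,M_\Gamma(A))+s(-x^*,M_\Gamma(A))=0$ for all $x^*\in X^*$ forces $M_\Gamma(A)$ to be a singleton, so the multivalued Pettis integral collapses to the vector one --- which is a correct and worthwhile addition.
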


\section{Conclusion}\label{sec13}

Some limit theorems  for the sequences
$\left(\int f_n\,dm_n \right)_n$ are presented
for  vector and multivalued Pettis integrable functions when the sequence $(m_n)_n$ vaguely converges to a measure $m$.
 The results are obtained thanks to a limit result obtained for the scalar case (Theorem \ref{Th1-4}).\\

{\small
This research has been accomplished within the UMI Group TAA “Approximation Theory and Applications”, the  G.N.AM.P.A. of INDAM and the
 Universities of Perugia and Palermo. \\ It was  supported by
 Ricerca di Base 2019 dell'Universit\`a degli Studi di Perugia - "Integrazione, Approssimazione, Analisi Nonlineare e loro Applicazioni";
 "Metodiche di Imaging non invasivo mediante angiografia OCT
sequenziale per lo studio delle Retinopatie degenerative dell'Anziano (M.I.R.A.)", funded by FCRP, 2019;
 F.F.R. 2023 - Marraffa dell'Universit\`a degli Studi di Palermo.
}


\Addresses

\end{document}